\pgfplotsset{compat=1.17}
\renewcommand{\leq}{\leqslant}
\renewcommand{\geq}{\geqslant}
\renewcommand{\log}[1]{\mathrm{ln}\left(#1\right)}
\newcommand{\ind}{\mathbf{1}}
\renewcommand{\P}[1]{\mathbb{P}\left(#1\right)}
\newcommand{\e}{\varepsilon}
\newcommand{\Z}{{\mathbb{Z}}}
\newcommand{\N}{{\mathbb{N}}}
\newcommand{\R}{\mathbb{R}}
\newcommand{\G}[1]{\mathrm{Geo}\left(#1\right)}
\newcommand{\Ex}[1]{\mathbb{E}\left(#1\right)}
\newcommand{\Var}[1]{\mathrm{Var}\left(#1\right)}
\newcommand{\li}{\mathrm{li}}
\newcommand{\lii}{\left[\mathrm{li}^{-1}\right]}
\newcommand{\SV}{\mathrm{SV}_0}
\theoremstyle{plain}
\newtheorem{Thm}{Theorem}
\newtheorem{Prop}{Proposition}
\newtheorem{Cor}{Corollary}
\newtheorem{Lem}{Lemma}
\newtheorem{Conj}{Conjecture}
\theoremstyle{definition}
\newtheorem{Rmk}{Remark}
\begin{document}
\title{First order of the renewal covering of the natural numbers}
\author{Alberto M. Campos}
\maketitle

\begin{abstract}
This paper introduces a new type of covering process that covers the set of natural numbers using renewal processes as objects. Inspired by the behavior of prime numbers, the model in each step finds the smallest vacant point, $k$, and place, starting in $k$, a renewal process with a step distribution given by a geometric random variable with parameter $\frac{1}{k}$. The model depends on its entire past, and small perturbations in its initial value can lead to very different outcomes. Here, we expose a technique that finds the first-order limit behavior for the number of objects placed until $n$, which exhibits intriguing similarities to prime number distributions, having a concentration around $n\log{n}$.
\end{abstract}

\section{Introduction}\label{sec:Intro}\noindent

This paper studies a collection of dependent renewal processes that fully covers the natural numbers, denoted by $\N=\{1,2,\cdots\}$. The problem itself belongs to a class of probability problems related to covering processes, where given a space, in this case $\N$, the process performs a cover of $\N$ using random objects that for us will be random renewal sets. In function of that, denote this process as a \textbf{renewal covering of the natural numbers}. The problem treated in this paper resembles the structure and behavior of prime numbers; this similarity is delicate and will be addressed clearly after the statements of the theorem. However, when defining the process, it is interesting for the reader to keep in mind a parallel to the set of primes.

Given a set of i.i.d. random variable $(G_k)_k$, define the \textbf{renewal set} associated with the sequence $(G_k)_k$ as the set $\{\sum_{k=1}^{n} G_k\}_n\subseteq \N$. This paper always considers $G_k$ to be distributed as a \textbf{geometric random variable} with some parameter $p\in[0,1]$, denoted by $\G{p}$, where $\P{\G{p}=k}=(1-p)^{k-1}p$ for $k\in \{1,2,\cdots\}$.

Before giving a formal definition of the model, let us describe the process in words. The process consists of a sequence $(P_n)_n$ constructed through induction.
\begin{itemize}
    \item Start by choosing $P_1=2$ almost surely.
    \item In the set of points greater than $P_1$, i.e., $\{3,4,\cdots\}$, sample for each point $k$ an independent Bernoulli $\xi_k^1$ with parameter $\frac{1}{P_1}$ ($=\frac{1}{2}$). Whenever, $\{\xi_k^1=1\}$ occurs, we say that $k$ is a random multiple of $P_1$. 
    \item Let $\mathcal{M}_1=\{k\in \{3,4,\cdots\}: \xi_k^1=1\}$ be the set of random multiples of $P_1$, then take the smallest natural non-multiple of $P_1$, i.e. $\min\left\{\{3,4,\cdots\}\setminus \mathcal{M}_1\right\}$, to be the next element of the sequence $P_2$.  
\end{itemize} 
Notice that, with probability one, the set $\mathcal{M}_1$ is different from $\{3,4,\cdots\}$, so one can construct the element $P_2$ almost surely. Moreover, one have that $P_2-P_1\sim \G{\left(1-\frac{1}{2}\right)}$. 

With $n\geq 1$ assume that the elements $P_1$,$\cdots$ and $P_{n}$, with its sets of multiples $\mathcal{M}_1$,$\cdots$ and $\mathcal{M}_{n}$. To find the next pair of elements $P_{n+1}$ and $\mathcal{M}_{n+1}$, do as follows: 
\begin{itemize}
    \item The element $P_{n+1}$ is going to be the smallest natural greater than $P_n$, that is not multiple of $P_1$, $\cdots$ and $P_n$, i.e., $\min\left\{\{P_{n}+1,P_{n}+2,\cdots\}\setminus\bigcup_{k=1}^n \mathcal{M}_k\right\}$. 
    \item With $P_{n+1}$ fixed, sample for each natural $k$ bigger than $P_{n+1}$, an independent Bernoulli random variable $\xi_k^{n+1}$; Then, set $\mathcal{M}_{n+1}=\{k\in \{P_{n+1}+1,P_{n+1}+2,\cdots\}: \, \xi_k^{n+1}=1\}$. 
\end{itemize}
Equally to the first step in the construction, notice that $P_{n+1}$ exists almost surely. And, by calculation, we get $P_j-P_{j-1}\sim \G{\prod_{k=1}^j\left(1-\frac{1}{P_j}\right)}$.\bigskip

The construction above resembles the structure of prime numbers, where the next prime is the smallest number that is not a multiple of any previous prime. However, the problem proposed here is random and depends on all its past to be described, so the values and behavior of the process can be very different from those known from algebra. Let us go ahead and say that with high probability, the value of the random variable $P_n$ is different from the value of the $n-th$ prime. 

Locally, the process may be different from the prime numbers, however, it will be demonstrated that the total number of objects needed to cover the interval $\{2,3,\cdots,n\}$ corresponds in first order to the number of primes up to the value $n$, that is, the prime counting function $\pi(n)$. Since $P_n$ is not a prime number, to avoid a possible misleading notation, denote $P_n$ as the $n-th$ \textbf{generator}; one can think of it as a random-ideal generator in the context of algebra. 

To describe the process rigorously, consider the space of increasing sequence $\Omega=\{(q_n)_n: q_n\in \N,\, q_n>q_{n-1}\}$. Let $\mathcal{F}_n$ be the $\sigma-$algebra generated by the cylinder of size $n$, that is, $\{(q_1,\cdots, q_k)\in \N^{k}: 1\leq q_1<q_2<\cdots <q_k,\, k\leq n\}$, and set $\mathcal{F}=\sigma\left(\bigcup_n\mathcal{F}_n\right)$ the \textbf{cylinder $\sigma-$algebra}. 

In such a measurable space, define inductively the renewal covering of the natural numbers. For this, define a set of \textbf{ generators} $(P_n)_n$ inductively. Where $P_1=2$ almost surely, and for each $n>1$ set: 
\begin{align}\label{eq:DefiniçãoPn}
P_{n+1}^{}&=P_{n}^{}+ \G{\prod_{k=1}^{n}\left(1-\frac{1}{P_k^{}}\right) }.
\end{align} Where, for any $j\geq 1$, given a increasing sequence of naturals $1\leq a_1<\cdots<a_n$,  the geometric satisfies:
\begin{align}\label{eq:independentgeogiven}
    &\P{\G{\prod_{k=1}^{n}\left(1-\frac{1}{P_k^{}}\right)}=j\middle|P_1^{}=a_1,\cdots,P_n^{}=a_{n}}\nonumber\\  &\quad \,\quad\, = \P{\G{\prod_{k=1}^{n}\left(1-\frac{1}{a_k}\right)}=j}. 
\end{align} In words, this means that the parameter of the geometric depends in the value of the first $n$ elements, i.e. belongs to $\mathcal{F}_n=\sigma(P_1,\cdots,P_n)$, but fixed the values of $(P_1,\cdots,P_n)$ the value of the geometric is independent. To simplify the notation in the proof, it is worth to define the random parameter:
\begin{align}\label{eq:Definiçãolambdan}
    \lambda_n= \prod_{k=1}^{n}\left(1-\frac{1}{P_k}\right).
\end{align} 

Our result consists of finding the first-order term of the sequence $(P_n)_n$, thus doing a law of large numbers for such a sequence. 
\begin{Thm}\label{thm:1}
    Consider $(P_n)_n$ a renewal covering of the natural numbers defined above. Then: 
    \begin{align*}
        \frac{P_n}{n\log{n}}\to 1 \quad \text{ almost surely}.
    \end{align*}
\end{Thm}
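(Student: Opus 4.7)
The proof is a bootstrap argument, in the spirit of deriving $p_n\sim n\log{n}$ from the prime number theorem. The definition $P_{n+1}=P_n+\G{\lambda_n}$ gives $\Ex{P_{n+1}-P_n\mid \mathcal{F}_n}=1/\lambda_n$ with conditional variance $(1-\lambda_n)/\lambda_n^2$, so $P_n$ tracks the partial sum $\sum_{k<n}1/\lambda_k$. The task reduces to analysing $\lambda_n=\prod_{k\leq n}(1-1/P_k)$ self-consistently: crude control of $P_n$ yields control of $\lambda_n$, which through concentration of the geometric increments sharpens the control of $P_n$, iterating to pin down the scale $n\log{n}$.

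\textbf{A priori bounds.} From $P_k\geq k+1$ one gets $\lambda_n\geq 1/(n+1)$, hence $\Ex{P_n}=O(n^2)$. Conditional Chernoff estimates for the $\G{\lambda_k}$ increments together with Borel--Cantelli give an almost sure polynomial upper bound of the form $P_n=O(n^2(\log{n})^{\alpha})$. Plugging this into
\begin{align*}
-\log{\lambda_n}=\sum_{k=1}^n\bigl(-\log{1-1/P_k}\bigr)=\sum_{k=1}^n \frac{1}{P_k}+O(1),
\end{align*}
and estimating the sum by Abel summation against the empirical counting function $\tau(x):=\#\{k:P_k\leq x\}$, one obtains a first polylogarithmic window $(\log{n})^{c_1}\leq 1/\lambda_n\leq (\log{n})^{c_2}$.

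\textbf{Martingale step and self-consistency.} The process $M_n:=P_n-\sum_{k<n}1/\lambda_k$ is an $(\mathcal{F}_n)$-martingale with $\langle M\rangle_n\leq \sum_{k<n}1/\lambda_k^2=O(n(\log{n})^{2c_2})$, so the martingale strong law gives $M_n=o(n\log{n})$ almost surely. It then suffices to solve asymptotically the coupled relations
\begin{align*}
P_n\sim \sum_{k<n}\frac{1}{\lambda_k},\qquad -\log{\lambda_n}\sim \int_2^{P_n}\frac{\tau(t)}{t^2}\,dt.
\end{align*}
A fixed-point / iterative argument, reinjecting each improved bound on $P_n$ into the estimate for $\lambda_n$, forces $\tau(t)\sim t/\log{t}$ and $1/\lambda_n\sim \log{n}$; summing then yields $P_n\sim \sum_{k<n}\log{k}\sim n\log{n}$.

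\textbf{Main obstacle.} The hard part is running the bootstrap quantitatively: the two asymptotic relations above are self-referential, and errors can accumulate through the iteration. In particular, the probabilistic analogue of Mertens' theorem, $\sum_{k\leq n}1/P_k\sim \log{\log{n}}$ almost surely, requires tail estimates on the $P_k$ that are uniform enough in $k$ to be summable, so the delicate work lies in refining the conditional Chernoff / Borel--Cantelli estimates at each stage of the bootstrap until they are tight at the scale $\log{n}$.
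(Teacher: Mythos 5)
Your high-level picture matches the paper's: reduce to the pseudo-concentration $P_n\approx\sum_{k<n}\lambda_k^{-1}$, then extract the scale $n\log n$ from the self-referential relations between $P_n$ and $\lambda_n$. Your martingale route to the concentration step is a genuine alternative to the paper's quenched/annealed McDiarmid argument, and it is a clean observation that $M_n=P_n-\sum_{k<n}\lambda_k^{-1}$ is an $(\mathcal F_n)$-martingale (the paper never uses this structure). But there are two concrete problems.

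First, the ``first polylogarithmic window'' claim is wrong as stated. From $P_k\geq k+1$ you get $-\log{\lambda_n}=\sum_{k\leq n}1/P_k+O(1)=O(\log n)$, which is an \emph{upper} bound on $1/\lambda_n$. For a \emph{lower} bound on $1/\lambda_n$ you need a lower bound on $\sum 1/P_k$, hence an upper bound on $P_k$; but plugging $P_k=O(k^2(\log k)^\alpha)$ gives $\sum 1/P_k=O(1)$, i.e.\ $\lambda_n$ bounded away from $0$, not $1/\lambda_n\geq(\log n)^{c_1}$. The correct first iteration is a contradiction argument, not a window: if $\lambda_n\geq c_0>0$ eventually, then (by your martingale bound or by the paper's stochastic domination, Lemmas \ref{Lem:estocdomsequence}--\ref{Lem:estocdomsequence2}) $P_n=O(n)$ a.s., whence $\sum 1/P_k$ diverges and $\lambda_n\to 0$ --- contradiction. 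This is exactly Lemma \ref{Lem:lambdatozero} of the paper, and without it your bootstrap never leaves the ground.

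Second, and more seriously, the ``fixed-point / iterative argument'' that is supposed to force $1/\lambda_n\sim\log n$ is precisely the missing content, and you acknowledge it only as a ``main obstacle'' without giving the mechanism. The consistency computation (checking $f(n)=(\log n)^\beta$ only balances at $\beta=1$) shows the fixed point is the right candidate, but it does not prove convergence to it: one must rule out the process lingering for all time at the wrong scale. The paper does this in two non-trivial pieces you would need to reconstruct: (i) the tightness of $(\lambda_n)$ in the class of slowly varying sequences (Lemma \ref{Lem:lambdaslowv} plus Karamata, Proposition \ref{Prop:Karamathalimit1}), which makes the relation $\sum_{k\leq n}\lambda_k^{-1}\sim n\lambda_n^{-1}$ quantitative and uniform over the random sequence; and (ii) the up-crossing / ``onion layer'' argument of Lemma \ref{Lem:Pnmaiorquenlnne} and the final proof, which shows that if $P_n$ sits above $(1+a)n\log n$ over an interval, then $\sum_{k\leq n}\lambda_k^{-1}$ grows only like $n(\log n)^{1/(1+a)}$ there, so the concentration $|P_n-\sum\lambda_k^{-1}|<\sqrt{n^\alpha}$ is violated before $P_n$ can reach $(1+2a)n\log n$. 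Without a replacement for (ii), the reinjection of improved bounds does not close: you would get better and better consistency constraints but no almost-sure statement at any fixed $\e$.
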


At first glance, the model and the result do not seem surprising, since they consist of a law of large numbers for the sum of geometric random variables. However, the process depends on all its past, $P_n\in \mathcal{F}_{n-1}=\sigma(P_1,\cdots,P_{n-1})$, and the model is a non-Markovian process; small perturbations in its initial value can lead to very different outcomes. To the best of our knowledge, the way the paper deals with the dependencies of the process is a novelty. 
\bigskip

This idea of creating random prime numbers is not new in the literature. Landau in 1903, \cite{Landau1903}, showed using local analytic lemmas how to derive information about the zeta function using the set of primes. Then, in $1937$, Beurling proposed an inverse problem, the Beurling generalized prime systems \cite{Beurling1937}, where fixing a sequence $(P_n)_n$, and controlling the size of the multiplicative semigroup generated by it, is possible to control how dense such a set is. The idea behind Beurling's construction was to create conditions under which if the prime numbers were satisfied, the Riemann hypothesis would be true. 
This line of study is very active and has several applications and contributions; see Diamond (2005) \cite{Diamond2005} and Olofsson (2011) \cite{Olofsson2011}.  

Another set of techniques, associated with random primes, is to define a random prime gap with some distribution, usually $\G{\ln^{-1}\left(n\right)}$. The analyzes of such sequence of random gaps can provide insight into the behavior of the prime numbers; see Crammer \cite{Cramr1936}, where in $1936$ he conjectured that the prime gap is of order $\ln^2\left(n\right)$. 

This paper shifts the focus of the deterministic zeta function to a random perspective that tries to see this algebraic structure as some fixed point in the sense of limiting distribution. The goal is not to create a random model that copies the prime numbers but to see if in this random process of filling gaps there exists a deterministic law of large numbers. It is left as an open problem to determine all the first-order terms for processes where, for some $\alpha>0$ the generators behave as $P_{n+1}=P_{n}+\G{\prod_{k=1}^n \left(1-P_k^{-\alpha}\right)}$. 

Another difference between this article and the previous literature is the notation and the way the results are expressed. Generally, number theory and renewal theory work with the inverse function, while in this text, due to the techniques developed, we chose to work with the sequence $ (P_n)_n $. Despite this difference, it is simple to show that, with the concentration of $P_n$, inverting it, the number of generators until $n$ concentrates around $\frac{n}{\log{n}}$. 

To clarify some connection's, due to Dursatt (1999) \cite{Dusart1999} and Rosser (1941) \cite{Rosser1941}, the $n-th$ prime $p_n$, for $n\geq 7$ satisfies:
\begin{align}\label{eq:DursattRosserbound}
    n\log{n}+n\ln{\log{n}}-n\leq p_n\leq n\log{n}+ n\ln{\log{n}}. 
\end{align} Our proof shows that the renewal covering have the first order term in the limit behaving equal to the prime number. We believe that the second order term is also present in the problem, but  the same is not true for the third-order term, leaving open the following conjecture: 
\begin{Conj}\label{Conj:1}
    Consider $(P_n)_n$ a renewal covering of the natural numbers defined above. Then, there exists a random variable $Z$, such that:  
    \begin{align*}
        \frac{P_n-n\log{n}-n\ln{\log{n}}}{n}\overset{D}{\implies}Z, 
    \end{align*} where the convergence is in distribution. 
\end{Conj}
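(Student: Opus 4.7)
The plan is to establish the stronger almost-sure convergence $(P_n - n\log{n} - n\ln\log{n})/n \to Z$ for some random variable $Z$, which implies the conjectured distributional convergence. The strategy is the Doob decomposition $P_n = 2 + D_{n-1} + M_{n-1}$, with $D_n = \sum_{k=1}^n m_k$ the predictable compensator ($m_k := 1/\lambda_k$) and $M_n$ the martingale remainder, followed by a detailed asymptotic expansion of $m_k$ down to order $O(1)$.

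\textbf{Controlling the martingale.} I would first sharpen Theorem~\ref{thm:1} to the a.s.\ bound $P_n = n\log{n} + n\ln\log{n} + O(n)$ by propagating the second-order term through the same self-consistency argument that proves the first order. Since $m_k \sim \log{k}$ and $\Var{G_k \mid \mathcal{F}_k} \leq m_k^2$, the predictable quadratic variation satisfies $\langle M\rangle_n = O(n(\log{n})^2)$. A martingale LIL (with a truncation at $(\log{k})^{1+\e}$ to accommodate the subexponential geometric tails) then gives $M_n = O(\sqrt{n}(\log{n})^{3/2+\e})$ a.s., so $M_n/n \to 0$. Thus it suffices to prove $(D_n - n\log{n} - n\ln\log{n})/n \to Z$ a.s.

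\textbf{Expanding the compensator.} Using $\ln(1-x) = -x - x^2/2 + O(x^3)$ one obtains $\log\lambda_n = -\sum_{k\leq n} 1/P_k - \Sigma_2(\omega)/2 + o(1)$, where $\Sigma_2(\omega) := \sum_{k\geq 1} 1/P_k^2$ converges a.s. An Abel summation using the sharpened Theorem~\ref{thm:1} then gives
\begin{align*}
\sum_{k=1}^n \frac{1}{P_k} = \ln\log{n} + \kappa(\omega) + o(1),
\end{align*}
with $\kappa(\omega)$ a random constant absorbing the convergent series $\sum_k (P_k - k\log{k} - k\ln\log{k})/(k\log{k})^2$. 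Exponentiating, $m_k = e^{\Gamma(\omega)}\log{k}\,(1+o(1))$ where $\Gamma(\omega) = \kappa(\omega) + \Sigma_2(\omega)/2 + \cdots$; but Theorem~\ref{thm:1} \emph{compels} $e^{\Gamma(\omega)} = 1$ a.s., a nontrivial fixed-point identity on the path. Pushing the expansion of $m_k$ one order further should yield
\begin{align*}
m_k = \log{k} + \ln\log{k} + \eta_k + o(1),
\end{align*}
where $\eta_k$ is a random fluctuation term carrying the second-order path dependence. Summing, together with $\sum_{k\leq n}\log{k} = n\log{n} - n + O(\log{n})$ and $\sum_{k\leq n}\ln\log{k} = n\ln\log{n} + o(n)$, produces
\begin{align*}
\frac{D_n - n\log{n} - n\ln\log{n}}{n} = \frac{1}{n}\sum_{k\leq n}\eta_k - 1 + o(1),
\end{align*}
so that $Z = Z_0 - 1$, where $Z_0 := \lim_n n^{-1}\sum_{k\leq n}\eta_k$. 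The deterministic $-1$ here plays the role of the ``$-n$'' in the Dusart--Rosser bound~\eqref{eq:DursattRosserbound}.

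\textbf{Main obstacle.} The heart of the proof, and the reason the statement is posed as a conjecture, is showing that $n^{-1}\sum\eta_k$ converges a.s.\ \emph{and} that its limit $Z_0$ is genuinely nondegenerate. The same self-consistent feedback that forced $\Gamma(\omega) = 0$ at first order threatens to collapse $Z$ to a deterministic constant at second order. Establishing nondegeneracy would require decomposing $\eta_k$ into an ``ergodic'' bulk contribution (whose Cesaro mean is a.s.\ constant) and a ``memory'' contribution from the early values $P_1, P_2, \ldots$, whose factor $\prod_{j \leq J}(1 - 1/P_j)$ persists in every later $\lambda_k$ as a random multiplier. The limit $Z$ should then be expressible as a convergent infinite series in the early increments, and certifying that this series genuinely depends on $\omega$ (and identifying its distribution) is the final, and hardest, hurdle.
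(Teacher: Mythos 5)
This statement is Conjecture \ref{Conj:1}, which the paper explicitly leaves open: there is no proof in the paper to compare against, so the only question is whether your outline actually closes the conjecture, and it does not. The decisive gap is at your very first step, where you assume Theorem \ref{thm:1} can be ``sharpened'' to the almost-sure bound $P_n=n\log{n}+n\ln{\log{n}}+O(n)$ by the same self-consistency argument. The paper's machinery does not come close to that precision: the concentration results (Lemma \ref{Lem:firstconcentration}, Proposition \ref{Prop:secondconcentrationgood}) only compare $P_n$ with the random sum $\sum_{k\leq n}\lambda_k^{-1}$ up to $\sqrt{n^{\alpha}}$, and the layer/upcrossing argument behind Theorem \ref{thm:1} and Lemma \ref{Lem:Pnmaiorquenlnne} only separates curves differing by a multiplicative factor $(1+a)$, i.e.\ it controls errors of size $\e\, n\log{n}$, not $O(n)$. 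Your subsequent steps genuinely need the $O(n)$ bound: the absolute convergence of $\sum_k (P_k-k\log{k}-k\ln{\log{k}})/(k\log{k})^2$, hence the existence of the finite random constant $\kappa(\omega)$, the exponentiation giving $m_k=e^{\Gamma(\omega)}\log{k}\,(1+o(1))$, and the fixed-point identity $e^{\Gamma(\omega)}=1$ all fail if the error in $P_k$ is merely $o(k\log{k})$, which is all Theorem \ref{thm:1} provides. Since an a.s.\ second-order bound with $O(n)$ error is essentially a pathwise (tightness) version of the conjecture itself, the argument is circular precisely where it matters.

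The second gap is the one you name yourself: the convergence of $n^{-1}\sum_{k\leq n}\eta_k$ and the nondegeneracy (indeed the very existence) of $Z$ are not technical loose ends but the entire content of the statement; the same feedback that forces $\Gamma(\omega)=0$ could a priori force $Z$ to be deterministic or prevent convergence altogether, and nothing in the proposal rules this out. Smaller points: the martingale step needs a uniform a.s.\ bound $\lambda_k^{-1}=O(\log{k})$, which the paper only provides in a high-probability, non-quantitative form (Proposition \ref{Prop:concentrationfirt}); and aiming for a.s.\ convergence is strictly stronger than the conjectured convergence in distribution, which may be the wrong target (the conjecture is plausibly about a distributional limit coming from fluctuations that do not stabilize pathwise). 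As it stands, this is a sensible roadmap for attacking the conjecture, with a correct identification of where the difficulty lies, but it is not a proof.
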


\textbf{Organization.} The structure of the paper is organized as follows: Section \ref{sec:Sketch} provides an sketch of the proof. Sections \ref{sec:Domination} and \ref{sec:Tightness} discuss the behavior of the random variable $\lambda_n$. In Section \ref{sec:Concentration}, a type of concentration result for the sequence $P_n$ is exposed. Section \ref{sec:law1} translates these fluctuations into the law of large numbers, thereby proving Theorem \ref{thm:1}.

\textbf{Notations.}The set of integers is denoted by $\Z=\{\cdots,-1,0,1,\cdots\}$,  the set of real numbers is going to be $\R$. 
The symbol $\forall$ means "for all" and the symbol  $\exists$ means "exists".

\textbf{Acknowledgment.} The author would like to thank Marco Aymone for his insightful discussions and valuable information regarding the literature on random primes.

\section{Proof Sketch of the proof}\label{sec:Sketch}\noindent

The proof of Theorem \ref{thm:1} consists of a long argument with three main steps. This outline will briefly explain each step, providing the general goals of the main result. To motivate the reader, the explanation given here follows a natural order (not the order of the proof). The best way to explain the proof is to start with the objective of the second step, then show a brief argument on how to finish the proof of the theorem, and finally explain the initial step.

The main objective of the proof is to smooth out the dependence that exists in the sequence $(P_n)_n$. For this, in the second step of the proof, we expose a concentration result for the sequence $(P_n)_n$ that resembles what is expected for the sums of independent geometric random variables. Specifically, remember the definition of the parameter $\lambda_n=\prod_{k=1}^n\left(1-\frac{1}{P_k}\right)$ and the fact that $P_n$ is a sum of geometric random variables with parameter $\lambda_n$. Then, for every $\e>0$ and $\alpha>1$, there exists $n_0=n_0(\e,\alpha)$, such that with high probability:
\begin{align}\label{eq:Psk-concentration}
    \P{\left|P_n-\sum_{k=1}^n \frac{1}{\lambda_k}\right|<\sqrt{n^{\alpha }} ,\,\forall n>n_0}>1-\e. 
\end{align}

At first glance, this probability gives the concentration of the problem. However, notice that $\lambda_n$ is also an unknown random variable. The reader can think that result \eqref{eq:Psk-concentration} is as a pseudo-concentration, that shows that two random variables, $\mathcal{F}_n$ measurable, are close together. 

The second object in this proof is subtle, but has a nice explanation. Basically, it consists of transforming the pseudo concentration \eqref{eq:Psk-concentration} into a law of large numbers. To briefly explain how, start by recalling that any random variable $\mathcal{F}_n$ measurable is written as a function $F$ of the values of $P_1$, $\cdots$, $P_n$. Therefore, equation \eqref{eq:Psk-concentration} has two possibilities: The first case is when the difference between the random variables is trivially true, and the second case is when the difference is not trivial and thus not every sequence $(P_n)_n$ will satisfy the equation. For example, the function $F(x_1,\cdots,x_n)=x_n$ implies that $P_n-F(P_1,\cdots, P_n)=0$ for every $n$, but the function $F(x_1,\cdots,x_n)=x_1+\cdots+x_n$ implies that $P_n$ is close to its summation; by some approximations, it is possible to relate the growth of $P_n$ with an exponential growth (the only type in which its value is close to its integral). 

Curiously, in this paper, the inequality leads to an integral inequality that can be related to a differential equation, and another field appears. The solutions of the differential equation is the inverse function of the logarithmic integral function, believed to be the right order of $\pi(n)$ see \cite{Riemann2008}. In particular, the solution have a growth of order $n\log{n}+n\log{\log{n}}$, and by some comparative arguments one can conclude that the values of $P_n$ are close to the solution of this differential equation. 

The first step in proof, and what will imply in the pseudo-concentration, is a tightness problem. As mentioned, when conditioning in the set of parameters $\lambda_n$, the quenched problem satisfies the equation \eqref{eq:Psk-concentration}. To go from quenched space to annealed space, the sequence $\lambda_n$ has to belong with high probability to a specific set of sequences. More precisely,  $\lambda_n$ is going to be tight in the space of slowly varying sequences, the sequences $(a_n)_n$ such that for every $t>0$, $\frac{a_{nt}}{a_n}$ converges to one when $n$ grows. 

\section{Stochastic Domination}\label{sec:Domination}\noindent

Let $X$ and $Y$ be two random variables, we say that $X$ stochastic dominate $Y$ if $\P{X\geq x}\geq \P{Y\geq x}$ for every $x\in \R$, whenever this happens, denote it as $X\succeq_{est} Y$  or $Y\preceq_{est} X$. Specifically for geometric random variables, let $1\geq p>q \geq 0$, when $X\sim \G{q}$ and $Y\sim \G{p}$, then $X\succeq_{est} Y$. 

Stochastic domination is not concerned with the independence of random variables. Therefore, just the fact that $X_k\succeq_{est} Y_k$ for every $k\geq 0$ is not enough to conclude that $\sum_{k=1}^n X_k\succeq_{est} \sum_{k= 1}^{n} Y_k$. However, considering the condition that the sequences $(X_k)_k$ and $(Y_k)_k$ are independent, one can use the stochastic domination in each term to dominate the sum. The renewal coverage o $\N$ does not satisfy the independence condition, but due to the hypothesis imposed by the equation \eqref{eq:independentgeogiven} that condition on the values of $P_n$ the geometric random variable is independent, it is possible to find similar results given specific events.

\begin{Lem} \label{Lem:estocdomsequence}Let integers  $n\geq 0$ and $m>0$, and set any $p\in [0,1]$. Define $(G_k)_k$ as an i.i.d. sequence, where $G_k\sim \G{p}$. Then:
\begin{align*}
   \ind\{\lambda_m \leq p\}\sum_{k=m}^{m+n} \G{\lambda_k} &\succeq_{est} \ind\{\lambda_m \leq p\}\sum_{k=m}^{m+n} G_k .
\end{align*}
\end{Lem}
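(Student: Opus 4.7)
My strategy is to construct an explicit coupling of the increments of $(P_k)_k$ with an i.i.d.\ $\G{p}$ sequence so that, on the event $\{\lambda_m \leq p\}$, the former dominates the latter pointwise.

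The first step is to observe that $(\lambda_k)_k$ is non-increasing: from \eqref{eq:Definiçãolambdan} we have $\lambda_k = \lambda_{k-1}\bigl(1-\tfrac{1}{P_k}\bigr) \leq \lambda_{k-1}$ since $P_k \geq 1$, so on the event $\{\lambda_m \leq p\}$ one gets $\lambda_k \leq p$ for every $k \geq m$. This monotonicity is what propagates the hypothesis from the single index $m$ to the entire summation range.

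For the coupling, I would enlarge the probability space with an independent doubly indexed family $(U_i^{(k)})_{i \geq 1,\, k \geq m}$ of i.i.d.\ uniform variables on $[0,1]$, taken independent of $\mathcal{F}_m$. Using \eqref{eq:independentgeogiven}, I realize each increment $P_{k+1}-P_k$ as $\Delta_k := \min\{i \geq 1 : U_i^{(k)} \leq \lambda_k\}$, which, conditionally on $\mathcal{F}_k$, has the required law $\G{\lambda_k}$. Simultaneously, set $V_k := \min\{i \geq 1 : U_i^{(k)} \leq p\}$, so that $(V_k)_{k \geq m}$ is i.i.d.\ $\G{p}$ and independent of $\mathcal{F}_m$, hence in particular of $\lambda_m$. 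On $\{\lambda_m \leq p\}$, the inclusion $\{U_i^{(k)} \leq \lambda_k\} \subseteq \{U_i^{(k)} \leq p\}$ forces $\Delta_k \geq V_k$ pointwise, and summing over $k=m,\ldots,m+n$ gives $\sum_{k=m}^{m+n}\Delta_k \geq \sum_{k=m}^{m+n}V_k$ on this event. Since $(V_k)$ and $(G_k)$ have the same joint distribution and are both independent of $\lambda_m$, this pointwise inequality translates directly into the claimed stochastic domination of the two truncated sums.

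The main obstacle is precisely the dependence between the $\G{\lambda_k}$ variables that the preamble to the lemma flags: one cannot invoke the standard rule ``independent stochastic domination term by term implies domination of the sum'', because the parameters $\lambda_{k}$ are themselves random and determined by the earlier increments. The uniform coupling bypasses this by coordinating all the Bernoulli trials through a single source, so that the bound $\lambda_k \leq p$ --- guaranteed by the monotonicity of $(\lambda_k)$ on $\{\lambda_m \leq p\}$ --- immediately propagates to a pointwise comparison of the geometric increments. If one prefers to avoid enlarging the probability space, the same conclusion can be reached by induction on $n$: the base case $n=0$ conditions on $\mathcal{F}_m$ and compares the geometric survival functions $(1-\lambda_m)^{j-1} \geq (1-p)^{j-1}$ on $\{\lambda_m \leq p\}$, and the inductive step conditions on $\mathcal{F}_{m+1}$ using $\{\lambda_{m+1} \leq p\} \supseteq \{\lambda_m \leq p\}$ to apply the hypothesis to the shifted process.
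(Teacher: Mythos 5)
Your coupling proof is correct, and it takes a genuinely different route from the paper's argument. The paper proceeds by induction on $n$: it decomposes the event $\{\lambda_m \leq p\}$ over all possible values of $(P_1,\ldots,P_{m+n})$, peels off the last increment $\G{\lambda_{m+n}}$ using the conditional independence in \eqref{eq:independentgeogiven}, replaces it by a $\G{p}$ term, and then conditions on that independent term to invoke the inductive hypothesis. Your coupling does the same job in one stroke: by realizing both $P_{k+1}-P_k$ and $V_k$ from the same uniform source $U^{(k)}_\cdot$, and exploiting the monotonicity $\lambda_k \leq \lambda_m \leq p$ for $k\geq m$, you obtain the \emph{pointwise} inequality $\Delta_k \geq V_k$ on $\{\lambda_m\leq p\}$, from which the stochastic domination of the sums is immediate. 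What the coupling buys is brevity and conceptual transparency — it avoids the cylinder-set bookkeeping and the separate induction over $n$. What the paper's approach buys is staying strictly within the original probability space, at the cost of a longer inductive verification. Two small points worth tightening if you write this up in full: the uniforms $U^{(k)}_\cdot$ should be taken independent not just of $\mathcal{F}_m$ but of one another across $k$, and one should state explicitly that the process $(P_k)_{k>m}$ is \emph{constructed recursively} from these uniforms (so that $U^{(k)}_\cdot$ is automatically independent of $\mathcal{F}_k$), and that the lemma is read with $(G_k)$ independent of $\lambda_m$, which is how the paper uses it and what your $(V_k)$ delivers.
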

\begin{proof} The idea of the proof is to observe that $(\lambda_n)_n$ is a monotonous sequence, then if at some point the parameter become smaller than $p$, it will stay smaller than $p$. Therefore, after this moment the parameter is smaller than $p$, and the increment of the problem at each step stochastic dominates an independent geometric with parameter $p$. 

To be rigorous, the proof uses induction. Start by fixing $m>0$ and $p\in [0,1]$. In the initial case, when $n=0$, define $\Gamma_m^0=\{(a_1,\cdots,a_m)\in \N^m: \prod_{k=1}^m \left(1-\frac{1}{a_k}\right)\leq p,\, 2\leq a_1<a_2<\cdots<a_m\}$, and for any $x\in \R$, the problem satisfies the following: 
\begin{align*}
    \mathbb{P}\Big(\lambda_m \leq p \,&;\, \G{\lambda_{m}}\geq  \,x\Big)\\&= \sum_{\vec{a}\in \Gamma_m^0}\P{\G{\lambda_{m}}\geq x\,\middle| (P_1,\cdots,P_m)=\vec{a}} \P{ (P_1,\cdots,P_m)=\vec{a}}.
\end{align*} By condition \eqref{eq:independentgeogiven} and independence of the random variables, since $\G{q}\succeq_{est} \G{p}$ for any values of $1\geq p>q \geq 0$, one get that:
\begin{align*}
    \P{\lambda_m \leq p\, ;\, \G{\lambda_{m}}\geq  x}&= \sum_{\vec{a}\in \Gamma_m^0}\P{\G{\prod_{k=1}^m\left(1-\frac{1}{a_k}\right)}\geq x} \P{ (P_1,\cdots,P_m)=\vec{a}}\\
    &\geq \P{\lambda_m\leq p}\P{G_m\geq x}= \P{\lambda_m\leq p\, ;\, G_m\geq x}. 
\end{align*}

Now, assume for any fixed $n-1\geq 0$ that:
\begin{align}\label{eq:inductioninstocasticdomination}
    \P{\lambda_m \leq p\, ;\,\sum_{k=m}^{m+n-1} \G{\lambda_k}\geq x} \geq \P{\lambda_m \leq p\, ;\,\sum_{k=m}^{m+n-1} G_k\geq x},\,  \forall x\in \R.
\end{align}

We are going to prove the same inequality for $n$.  Let $\Gamma^{n}_m=\{(a_1,\cdots,a_{m+n})\in \N^{m+n}: \prod_{k=1}^m \left(1-\frac{1}{a_k}\right)\leq p,\, 2\leq a_1<a_2<\cdots<a_{m+n}\}$. Then, using condition \eqref{eq:independentgeogiven}, for any $x\in \R$, it is true that: 
\begin{align*}
    &\P{\lambda_m \leq p\, ;\,\sum_{k=m}^{m+n} \G{\lambda_k}\geq x}\\
    &=\sum_{\vec{a}\in \Gamma_m^{n}}\P{\G{\prod_{k=1}^{m+n}\left(1-\frac{1}{P_k}\right)}+P_{m+n-1}\geq x+P_{m-1}\, ;\, (P_1,\cdots,P_{m+n})=\vec{a}}\\
    &=\sum_{\vec{a}\in \Gamma_m^{n+1}}\P{\G{\prod_{k=1}^{m+n}\left(1-\frac{1}{a_k}\right)}+a_{m+n-1}\geq x+a_{m-1}}\P{(P_1,\cdots,P_{m+n})=\vec{a}}\\
    &\geq \sum_{\vec{a}\in \Gamma_m^{n+1}}\P{\G{p}+a_{m+n-1}\geq x+a_{m-1}}\P{(P_1,\cdots,P_{m+n})=\vec{a}}\\
    &= \P{\lambda_m \leq p\, ;\, G_{m+n}+\sum_{k=m}^{m+n-1} \G{\lambda_k}\geq x}.
\end{align*} Now, since $G_{m+n}$ is independent, conditioning in its value and using the induction hypotheses of equation \eqref{eq:inductioninstocasticdomination} , one get that: 
\begin{align*}
    \mathbb{P}\Bigg(\lambda_m \leq p\, &;\, G_{m+n}+\sum_{k=m}^{m+n-1} \G{\lambda_k}\geq x\Bigg)=\\
    &=\sum_{\ell=1}^{\infty}  \P{\lambda_m \leq p\, ;\, \sum_{k=m}^{m+n-1} \G{\lambda_k}\geq x-\ell}\P{G_{n+m}=\ell}\\
    &\geq  \sum_{\ell=1}^{\infty}  \P{\lambda_m \leq p\, ;\,\sum_{k=m}^{m+n-1} G_k\geq x-\ell}\P{G_{n+m}=\ell}\\
    &= \P{\lambda_m \leq p\, ;\,\sum_{k=m}^{m+n} G_k\geq x}.
\end{align*} As desired. 
\end{proof}

Analogously to the Lemma \ref{Lem:estocdomsequence}, it is possible to get the other side of the stochastic domination.
\begin{Lem}\label{Lem:estocdomsequence2}
    Let $n>0$ be an integer and set $p\in [0,1]$. Define $(G_k)_k$ as an i.i.d. sequence, where $G_k\sim \G{p}$. Then:
    \begin{align*}
        \ind\{\lambda_k \geq p, \forall k\geq 0 \} \sum_{k=1}^{n} \G{\lambda_k} &\preceq_{est}  \ind\{\lambda_k \geq p, \forall k\geq 0 \} \sum_{k=1}^{n} G_k.
    \end{align*}
\end{Lem}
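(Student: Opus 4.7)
I proceed by a direct coupling argument. Mimicking the inductive strategy of Lemma \ref{Lem:estocdomsequence} is awkward here because the event $\{\lambda_k \geq p,\ \forall k\geq 0\}$ is $\mathcal{F}_\infty$-measurable rather than $\mathcal{F}_n$-measurable for any finite $n$, so the usual step of conditioning on $\mathcal{F}_n$ and pulling the indicator out of the conditional expectation does not close. Coupling sidesteps the issue entirely.

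On a common probability space take i.i.d.\ $\mathrm{Uniform}(0,1)$ random variables $(U_n)_{n\geq 1}$. Construct the renewal covering via the quantile representation: put $P_1 = 2$ and, recursively, $P_{n+1} = P_n + F^{-1}_{\lambda_n}(U_n)$, where $F^{-1}_q$ is the quantile function of a $\G{q}$ random variable. Equation \eqref{eq:independentgeogiven} guarantees that this reproduces the joint law of $(P_n)_n$ demanded by the model, with the $n$-th increment equal to $\G{\lambda_n}=F^{-1}_{\lambda_n}(U_n)$. On the same space, set $G_k := F^{-1}_p(U_k)$; the sequence $(G_k)_k$ is i.i.d.\ $\G{p}$, as required in the statement.

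The decisive observation is that $q\mapsto F^{-1}_q(u)$ is non-increasing in $q$ for every fixed $u\in(0,1)$, because a geometric with larger parameter is stochastically smaller. Hence, whenever $\lambda_k\geq p$, one has the pointwise bound $\G{\lambda_k}=F^{-1}_{\lambda_k}(U_k)\leq F^{-1}_p(U_k)=G_k$. On the event $A=\{\lambda_k\geq p,\ \forall k\geq 0\}$ this inequality holds simultaneously for every $k\geq 1$, so summing over $k\in\{1,\dots,n\}$ yields
\begin{align*}
\ind_{A}\sum_{k=1}^n \G{\lambda_k}\leq \ind_{A}\sum_{k=1}^n G_k\qquad\text{pointwise on the coupled space}.
\end{align*}
A pointwise inequality is stronger than the distributional one, so it implies the desired stochastic domination $\ind_A\sum_{k=1}^n \G{\lambda_k}\preceq_{est}\ind_A\sum_{k=1}^n G_k$.

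The only routine verifications are that the quantile recipe reproduces the law \eqref{eq:independentgeogiven} (immediate from $U_n$ being independent of $\mathcal{F}_n$ and uniform on $(0,1)$) and that the geometric quantile is monotone in its parameter. The main obstacle, really, is just recognising that one should set aside the direct induction analogue of Lemma \ref{Lem:estocdomsequence} in favour of this simpler coupling, since the event indicator cannot be localised at any finite time.
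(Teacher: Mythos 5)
Your coupling argument is correct and genuinely different from the route the paper gestures at (adapting the induction of Lemma~\ref{Lem:estocdomsequence}). You explicitly build $P_{n+1}=P_n+F^{-1}_{\lambda_n}(U_n)$ from i.i.d.\ uniforms, which does reproduce the law required by~\eqref{eq:independentgeogiven} because $U_n$ is independent of $\sigma(P_1,\dots,P_n)$; the monotonicity of $q\mapsto F^{-1}_q(u)$ is correct (larger parameter means stochastically smaller geometric); and on $A=\{\lambda_k\geq p,\,\forall k\}$ the termwise bound $F^{-1}_{\lambda_k}(U_k)\leq F^{-1}_p(U_k)$ gives a pointwise inequality, which is indeed stronger than stochastic domination. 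Your diagnosis of why the direct induction is awkward is also fair: $A$ is a tail event, not $\mathcal{F}_n$-measurable, so the step in Lemma~\ref{Lem:estocdomsequence} where one conditions on the first $n$ coordinates and pulls the indicator through no longer closes cleanly (one would have to pass through the intermediate events $\{\lambda_m\geq p\}$ and then contend with the fact that these shrink as $m$ grows). What the coupling buys, beyond elegance, is exactly what the paper actually needs downstream: in Lemma~\ref{Lem:lambdatozero} the domination is combined with Proposition~\ref{Prop:LargeDeviationGeoindpendent} and Borel--Cantelli to conclude an almost-sure statement about $P_n$ on $A$, and transporting an a.s.\ property from $\sum G_k$ to $P_n$ is most naturally done on a common probability space, which you provide. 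One subtlety worth stating explicitly in your write-up: your $(G_k)_k$ is \emph{not} independent of the process $(P_n)_n$, since both are functions of the same $(U_k)_k$; in the proof of Lemma~\ref{Lem:estocdomsequence} the auxiliary sequence is treated as independent of the process. For the bare statement $\preceq_{est}$ this is harmless (the pointwise inequality pins down the law of $\ind_A\sum G_k$ under your coupling and that is enough), and for the later applications the coupled version is in fact the more useful one, but it is good hygiene to say you are choosing a particular coupling rather than the independent one implicit in the paper's first lemma.
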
 The proof is analogous and can be done following the same steps proved in Lemma \ref{Lem:estocdomsequence}, so it is left as an exercise. 

To make full use of the stochastic domination with respect to the sum of independent geometric random variables, consider the classical result of large deviation theory; see Section 2.7 of Durrett \cite{Durrett2019} for a proof. 
\begin{Prop}\label{Prop:LargeDeviationGeoindpendent}
    Let $p\in[0,1]$, and $(G_k)_k$ and i.i.d. sequence with $G_k\sim \G{p}$. Then, for every $\e>0$, there exists a value $\mathcal{I}(\e)>0$ such that: \begin{align*}
        \P{\left|\sum_{k=1}^n G_k-\frac{n}{p}\right|>\e n}\leq \exp\{-n\mathcal{I}(\e)\}.
    \end{align*}
\end{Prop}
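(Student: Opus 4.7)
The plan is a textbook Cramér-Chernoff (exponential Markov) argument, applied separately to the upper and lower tails of $S_n := \sum_{k=1}^n G_k$. Since $G_1\sim\G{p}$ has $\Ex{G_1}=1/p$ and moment generating function
\begin{align*}
    M(t)\;=\;\Ex{e^{tG_1}}\;=\;\frac{pe^t}{1-(1-p)e^t},
\end{align*}
finite and smooth on $\{t<-\log{1-p}\}$, its logarithm $\Lambda(t):=\log{M(t)}$ is smooth and strictly convex on that domain, with $\Lambda(0)=0$ and $\Lambda'(0)=1/p$.

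For the upper tail, Markov's inequality applied to $e^{tS_n}$ with $0<t<-\log{1-p}$ yields
\begin{align*}
    \P{S_n\geq(1/p+\e)n}\;\leq\;\exp\{-n[t(1/p+\e)-\Lambda(t)]\},
\end{align*}
and optimizing in $t$ produces the Chernoff bound $\exp\{-n\Lambda^*(1/p+\e)\}$, where $\Lambda^*$ denotes the Legendre transform of $\Lambda$. An entirely analogous computation, applying Markov to $e^{-tS_n}$ with $t>0$ and optimizing, gives $\P{S_n\leq(1/p-\e)n}\leq\exp\{-n\Lambda^*(1/p-\e)\}$. Setting $\mathcal{I}(\e):=\tfrac{1}{2}\min\{\Lambda^*(1/p+\e),\Lambda^*(1/p-\e)\}$ and taking a union bound absorbs the factor of two in $\mathcal{I}(\e)$ for all sufficiently large $n$; the finitely many remaining $n$ are handled by shrinking $\mathcal{I}(\e)$ further if needed (or, since $\P{|S_n-n/p|>\e n}\leq 1$, by noting that any $\mathcal{I}(\e)>0$ satisfies the bound for small $n$ once it has been made small enough).

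The one step that genuinely requires attention, rather than being a direct quotation of Cramér's theorem, is the strict positivity $\mathcal{I}(\e)>0$. This follows from the explicit formula for $M$: since $\Lambda$ is real-analytic and strictly convex on an open neighborhood of $0$, its Legendre conjugate $\Lambda^*$ is strictly convex with unique minimum $\Lambda^*(1/p)=0$, whence $\Lambda^*(x)>0$ for every $x\neq 1/p$; the degenerate case $p=1$ is trivial since $S_n=n$ almost surely. The cleanest alternative, as the statement itself suggests, is to invoke the i.i.d.\ Cramér large-deviation theorem from Section 2.7 of Durrett directly, which is exactly the stated inequality.
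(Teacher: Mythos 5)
The paper offers no proof of this proposition; it is a direct citation to Durrett's large--deviation chapter, and your Cram\'er--Chernoff derivation is exactly the content of that citation, so you are taking essentially the same route (and you even say as much in your final sentence). The one inaccuracy is the parenthetical remark that ``any $\mathcal{I}(\e)>0$ satisfies the bound for small $n$ once it has been made small enough'': to push $\exp\{-n\mathcal{I}(\e)\}$ above $\P{|S_n-n/p|>\e n}$ for a given small $n$ you need the latter to be \emph{strictly} less than $1$, and that can fail. For instance with $p=2/5$, $\e=3/10$, $n=1$, the target interval $[1/p-\e,\,1/p+\e]=[2.2,\,2.8]$ contains no integer, so $\P{|G_1-1/p|>\e}=1$ and no positive $\mathcal{I}(\e)$ works. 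This is really a small over-claim in the proposition as written (Durrett's statement is asymptotic; the uniform-in-$n$ bound is only needed --- and only true --- for $n$ beyond some $n_0(\e,p)$, which is all the paper's Borel--Cantelli arguments use). Your Chernoff computation itself is fine, and replacing the parenthetical by ``for $n\geq n_0(\e,p)$'' repairs the remaining gap.
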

\bigskip

With all the initial results and definitions of the stochastic domination, it is possible to give the first two lemmas about the sequence $(P_n)_n$ and $(\lambda_n)_n$. More specifically, it will be shown that almost surely $\lambda_n$ converges to zero and that $(P_n)_n$ grows faster than any linear function.

\begin{Lem}\label{Lem:lambdatozero}
    The sequence $(\lambda_n)_n$ converges to zero with probability one.  
\end{Lem}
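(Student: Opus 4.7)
The plan is to proceed by contradiction. Since each factor $1 - \tfrac{1}{P_k}$ lies in $(0,1)$, the sequence $(\lambda_n)_n$ is decreasing and bounded below by $0$, hence converges almost surely to a limit $\lambda_\infty\geq 0$. By monotonicity, $\{\lambda_\infty\geq p\}\subseteq A_p:=\{\lambda_k\geq p\ \forall k\}$, so it suffices to prove $\P{A_p}=0$ for every rational $p>0$; a countable union then gives $\lambda_\infty=0$ almost surely.

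On the event $A_p$, I would use Lemma \ref{Lem:estocdomsequence2} to dominate the growth of $P_n$ by an i.i.d.\ sum. Writing $P_{n+1}-P_1=\sum_{k=1}^n \G{\lambda_k}$ and letting $(G_k)_k$ be i.i.d.\ $\G{p}$, the lemma gives for every $x\in\R$,
\begin{align*}
\P{A_p\cap\{P_{n+1}-P_1\geq x\}}\leq \P{A_p\cap\Bigl\{\sum_{k=1}^nG_k\geq x\Bigr\}}\leq \P{\sum_{k=1}^nG_k\geq x}.
\end{align*}
Choosing $x=n/p+\e n$ and invoking Proposition \ref{Prop:LargeDeviationGeoindpendent} with any fixed $\e>0$, the right-hand side is bounded by $e^{-n\mathcal{I}(\e)}$, hence summable. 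Borel--Cantelli then implies that, almost surely on $A_p$, there exists a (random) $n_0$ such that $P_n\leq (1/p+\e)n+P_1$ for every $n\geq n_0$; in particular $P_n\leq Cn$ for some constant $C=C(p,\e)$ and all large $n$.

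The contradiction now follows from the elementary bound $\log(1-x)\leq -x$: on $A_p$,
\begin{align*}
\log\lambda_n=\sum_{k=1}^n\log\!\Bigl(1-\tfrac{1}{P_k}\Bigr)\leq -\sum_{k=1}^n\frac{1}{P_k}\leq -\sum_{k=n_0}^n\frac{1}{Ck}\longrightarrow -\infty,
\end{align*}
so $\lambda_n\to 0$ on $A_p$. But by definition $\lambda_n\geq p>0$ throughout $A_p$, a contradiction, so $\P{A_p}=0$.

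The main delicate step is the transition from the stochastic domination statement of Lemma \ref{Lem:estocdomsequence2} (which carries the indicator of $A_p$ on both sides) to a usable almost-sure statement on $A_p$; here the trick is that we only need an upper bound on $\P{A_p\cap\{\cdots\}}$, so dropping the indicator on the dominating side before applying the i.i.d.\ large deviation estimate is harmless. Everything else is a bookkeeping argument combining monotonicity of $(\lambda_n)_n$ with the harmonic-series divergence induced by the linear growth bound on $P_n$.
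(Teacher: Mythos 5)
Your proof is correct and takes essentially the same route as the paper: monotonicity gives almost-sure convergence, Lemma \ref{Lem:estocdomsequence2} with Proposition \ref{Prop:LargeDeviationGeoindpendent} and Borel--Cantelli yield an eventual linear bound $P_n\leq Cn$ on the event $\{\lambda_n\geq p\ \forall n\}$, and the divergence of the harmonic series then forces $\lambda_n\to 0$ there, a contradiction. The only cosmetic difference is that the paper phrases the contradiction via the two incompatible event identities $\{\lambda_n\geq c\ \forall n\}=\{\ldots;\sum 1/P_k<\infty\}=\{\ldots;\sum 1/P_k=\infty\}$, whereas you derive $\lambda_n\to 0$ directly on $A_p$ and then invoke $\lambda_n\geq p$.
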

\begin{proof}
 For any $n>0$, the value of $\lambda_n$ is a product of numbers less than one; therefore, it is a monotone bounded decreasing sequence and must converge to some number with probability one. Now,  assuming by absurd that it do not converge to zero almost surely, then there exists an $c\in(0,1)$ such that:
 \begin{align}\label{eq:lemalimitemaiorquezeroc}
     \P{ \lambda_n\geq c,\, \forall n>0}>0.
 \end{align} To finish, we are going to show that there exists an event $A$, such that $\{\{\lambda_n\geq c,\,\forall n>0\}\, ;\,A\}$ have the same probability of the events $\{\{\lambda_n\geq c,\,\forall n>0\}\, ;\,A^c\}$ and $\{ \lambda_n\geq c,\, \forall n>0\}$. Implying that equation \eqref{eq:lemalimitemaiorquezeroc} is false, concluding this proof. 

Notice that if $\{\lambda_n\geq c,\, \forall n>0\}$ occur, using the formula of $\lambda_n$ in \eqref{eq:Definiçãolambdan}, and that $1-x\leq e^{-x}$ for all $x>0$, one gets that:
 \begin{align*}
     c\leq \lambda_n= \prod_{k=1}^n\left(1-\frac{1}{P_k}\right)\leq \exp\{-\sum_{k=1}^n \frac{1}{P_k}\}.
 \end{align*} And thus, the summation $\sum_{k=1}^{n} \frac{1}{P_k}$ must converge. Then:
 \begin{align*}
     \left\{\lambda_n\geq c,\, \forall n>0\right\}&=\left\{\lambda_n\geq c, \,\forall n>0 \, ;\, \sum_{k=1}^{\infty} \frac{1}{P_k}<\infty\right\}, \text{ and }\\
      \P{\lambda_n\geq c,\, \forall n>0}&=\P{\lambda_n\geq c, \,\forall n>0 \, ;\, \sum_{k=1}^{\infty} \frac{1}{P_k}<\infty}.
 \end{align*} 

 On the other hand, assuming that $\{\lambda_n\geq c, \, \forall n>0\}$ occurs, by Lemma \ref{Lem:estocdomsequence2}, it is possible to stochastically dominate the random variable $P_n$.  Let $(G_k)_k$ and i.i.d. sequence where $G_k\sim \G{c}$, then: 
   \begin{align}\label{eq:stocasticdominationcporcima}
    \ind\{\lambda_n \geq c, \forall n\geq 0 \} \sum_{k=1}^{n} \G{\lambda_k} &\preceq_{est}  \ind\{\lambda_n \geq c, \forall n\geq 0 \} \sum_{k=1}^{n} G_k.
    \end{align} Since $P_n=2+\sum_{k=1}^{n} \G{\lambda_k}$, it is possible to control its value using the independent sum of the geometric random variable in the equation \eqref{eq:stocasticdominationcporcima}. Using the large deviation of Proposition \ref{Prop:LargeDeviationGeoindpendent} together with Borel-Cantelli, it is possible to conclude that the event $\{\exists n_0\text{ s.t. }P_n<\frac{2n}{c},  \, \forall n>n_0 \}$ occurs with probability one. But, if $P_n<\frac{2n}{c}$ for every $n>n_0$, then $\sum_{k=1}^{\infty} \frac{1}{P_k}$ diverges. Thus:
\begin{align*}
    \P{\lambda_n\geq c,\, \forall n}=\P{\lambda_n\geq c, \forall n\, ;\,\sum_{k=1}^\infty \frac{1}{P_k}=\infty}.
\end{align*} Concluding that for every $c>0$ the probability of the event $\{ \lambda_n\geq c,\, \forall n>0\}$ must be  zero, and with probability one, one get that $(\lambda_n)_n$ converges to zero. 
\end{proof}

\begin{Lem}\label{lem:Pnmaiorquelinear}
    For every $C>0$, the sequence $(P_n)_n$ satisfies the following: \begin{align*}
        \P{\exists n_0 \text{ s.t. } P_n>Cn, \,  \forall n>n_0}=1.
    \end{align*}
\end{Lem}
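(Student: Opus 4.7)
The plan is to exploit Lemma \ref{Lem:lambdatozero} (which guarantees $\lambda_n \to 0$ a.s.) together with the lower stochastic domination of Lemma \ref{Lem:estocdomsequence}. The strategy mirrors, in reverse direction, the argument used inside the proof of Lemma \ref{Lem:lambdatozero}: once $\lambda_n$ becomes small, the increments of $P_n$ behave from below like i.i.d.\ geometrics with a small parameter, whose partial sums grow at least linearly with a large slope.

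More concretely, fix $C>0$ and pick $p\in(0,1)$ with $1/p>2C$. Since $\lambda_n\to 0$ a.s., the event $A_p=\{\exists m:\,\lambda_m\leq p\}$ has probability one. For each fixed $m\geq 1$, Lemma \ref{Lem:estocdomsequence} gives, for every $x\in\R$,
\begin{align*}
\P{\lambda_m\leq p\,;\,\sum_{k=m}^{m+n}\G{\lambda_k}<x}\leq \P{\lambda_m\leq p\,;\,\sum_{k=m}^{m+n}G_k<x},
\end{align*}
where $(G_k)_k$ is an i.i.d.\ sequence of $\G{p}$ variables. Choosing $x=(1/p-\delta)(n+1)$ for some small $\delta>0$ and invoking Proposition \ref{Prop:LargeDeviationGeoindpendent}, the right-hand side decays exponentially in $n$, hence is summable. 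Borel--Cantelli then yields that on $\{\lambda_m\leq p\}$, almost surely,
\begin{align*}
\sum_{k=m}^{m+n}\G{\lambda_k}\geq (1/p-\delta)(n+1)\quad\text{for every sufficiently large }n.
\end{align*}
Since $P_{m+n+1}-P_m=\sum_{k=m}^{m+n}\G{\lambda_k}$, this produces a (random) $n_0$ with $P_j\geq (1/p-\delta)(j-m)$ for all $j\geq m+n_0$, so $\liminf_n P_n/n\geq 1/p-\delta$ on the event $\{\lambda_m\leq p\}$.

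Taking the countable union over $m\in\N$ and then over a sequence $p\to 0$ and $\delta\to 0$ (all countable), the event $A_p$ has probability $1$ for every $p$, and therefore
\begin{align*}
\P{\liminf_{n\to\infty}\frac{P_n}{n}=+\infty}=1.
\end{align*}
In particular, for the original $C$, almost surely there exists $n_0$ such that $P_n>Cn$ for every $n>n_0$, which is the claim.

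The main obstacle is the usual one when translating stochastic domination into an almost-sure pathwise statement: Lemma \ref{Lem:estocdomsequence} only compares distribution tails, not realisations. This is handled by applying the domination jointly on the event $\{\lambda_m\leq p\}$ and then upgrading it with the exponential large deviations bound of Proposition \ref{Prop:LargeDeviationGeoindpendent} via Borel--Cantelli, exactly as in Lemma \ref{Lem:lambdatozero}. The only other delicate point is that the time $m$ at which $\lambda_m$ first falls below $p$ is random; countability of $m$ allows one to sidestep this without appealing to any stopping-time argument.
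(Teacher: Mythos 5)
Your argument is correct and follows essentially the same route as the paper: both rely on Lemma \ref{Lem:lambdatozero} to make $\lambda_m$ small, then use the stochastic domination of Lemma \ref{Lem:estocdomsequence} combined with the large deviation bound of Proposition \ref{Prop:LargeDeviationGeoindpendent} and Borel--Cantelli. The only difference is cosmetic: the paper fixes an $\e>0$ and a deterministic time $n_1$ with $\P{\lambda_{n_1}\leq 1/(2C)}\geq 1-\e$, then lets $\e\to0$, whereas you take a countable union over the (random) index $m$ at which $\lambda_m$ first drops below $p$; both are valid ways to handle the same bookkeeping issue.
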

\begin{proof}
     This proof uses a similar approach of the stochastic domination in Lemma \ref{Lem:lambdatozero} where it is proven that $\lambda_n$ converges to zero almost surely. Here, it is going to be used the stochastic domination of Lemma \ref{Lem:estocdomsequence}, where  the random variables $P_n$ will eventually dominates the sum of i.i.d. geometric random variables with arbitrary small parameters, implying that $P_n> Cn$ for every $C>0$ and $n$ large enough.

     Notice that $(\lambda_n)_n$ is a monotonous decreasing sequence with probability one. Moreover, by Lemma \ref{Lem:lambdatozero}, the sequence $(\lambda_n)_n$ converges almost surely to zero, implying that for every $C>0$ and every small $\e>0$, there exists a $n_1=n_1(\e,C)$ such that for every $n>n_1$:
    \begin{align}\label{eq:lambdanlessc1e}
        \P{\lambda_n\leq \frac{1}{2C}}\geq 1-\e.
    \end{align}  

    Furthermore, when the parameter becomes smaller than $\frac{1}{2C}$, the increment of the sequence $P_n$ will dominate a geometric random variable with a parameter $\frac{1}{2C}$. Thus, letting $(G_k)_k$ an i.i.d. sequence where $G_k\sim \G{\frac{1}{2C}}$, using Lemma \ref{Lem:estocdomsequence}, one gets that:
    \begin{align}\label{eq:stocasticdominationintheproof}
        \ind\left\{\lambda_{n_1}\leq \frac{1}{2C}\right\} P_{n_1+n}\succeq_{est}\ind\left\{\lambda_{n_1}\leq \frac{1}{2C}\right\} \sum_{k=1}^n G_k. 
    \end{align} 

    To finalize the proof, one can use Proposition \ref{Prop:LargeDeviationGeoindpendent} (large deviation principle) and Borel Cantelli lemma, to get that the event $\{\exists n_0 \text{ s.t. }$ $\sum_{k=1}^n  \G{\frac{1}{2C}}>Cn, \, \forall n>n_0\}$ happens with probability one. Since $n_1=n_1(\e,C)$ is finite, using the probability of equation \eqref{eq:lambdanlessc1e}, one get: 
    \begin{align*}
        \P{\lambda_{n_1}\leq \frac{1}{2C}\,;\,\left\{ \exists n_0 \text{ s.t. } \sum_{k=1}^n  \G{\frac{1}{2C}}>Cn, \,\forall n>n_0\right\}} =\P{\lambda_{n_1}\leq \frac{1}{2C}}\geq 1-\e.
    \end{align*} Which concludes by stochastic domination in equation \eqref{eq:stocasticdominationintheproof}, that for every $\e>0$: 
    \begin{align*}
        \P{\exists n_0 \text{ s.t. } P_n>Cn, \,\forall n>n_0}\geq 1-\e. 
    \end{align*}And, since $\e>0$ is arbitrary, we conclude the proof. 
\end{proof}

\section{Tightness in the slowly varying}\label{sec:Tightness}\noindent

Our goal now is to find the fluctuations of the random variable $\lambda_n$ when $n$ is large. For this, we will show that the sequence $\lambda_n$ belongs to a class of sequences that have a well behavior at infinity with probability one. Specifically, the set of slowly varying functions; see Chapter 0 from \cite{Resnick1987}. Moreover, to be self-contained in this subsection, we adapted the theory of slowly varying functions and prove the theorems needed for our result. 

For any number $x\in \R$, and sequence $(\gamma_n)_n$, define $\gamma(x)=\gamma_{\lceil x\rceil}$. Then, we say that $\gamma$ is \textbf{slowly varying at infinity} if for every $t>0$, one get that:
\begin{align*}
    \lim_{x\to \infty} \frac{\gamma(xt)}{\gamma(x)}=1.
\end{align*}Moreover, denote by $\SV$ the \textbf{set of all slowly varying sequences at infinity}. 

\begin{Lem}\label{Lem:lambdaslowv}
 The random sequence $(\lambda_n)_n$ belongs to $\SV$ with probability one. More than this, for every $\e>0$, $\delta>0$ and $t>0$, there exists $x_0=x_0(\e,\delta,t)$ such that: 
\begin{align}\label{eq:quantitativelambdaslow}
    \P{1-\delta\leq \frac{\lambda(xt)}{\lambda(x)}\leq 1+\delta, \, \forall x>x_0}>1-\e. 
\end{align}
\end{Lem}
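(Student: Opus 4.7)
The plan is to establish the quantitative estimate \eqref{eq:quantitativelambdaslow} first; the almost-sure membership of $(\lambda_n)_n$ in $\SV$ will follow by a standard dense-subset argument.

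For the quantitative part, write the ratio as a telescoping product: for $t \geq 1$,
\begin{align*}
\frac{\lambda(xt)}{\lambda(x)} = \prod_{k = \lceil x\rceil + 1}^{\lceil xt\rceil}\left(1 - \frac{1}{P_k}\right),
\end{align*}
while for $t \in (0,1)$ the ratio equals the reciprocal of the analogous product over $k \in (\lceil xt\rceil, \lceil x\rceil]$. Using $\left|\log{1-y}\right| \leq 2y$ for $y \in (0, 1/2]$, it suffices to bound $\sum_{k \in I} 1/P_k$ over the relevant index interval $I$. The key input is Lemma \ref{lem:Pnmaiorquelinear}, which lets us replace $1/P_k$ by the deterministic $1/(Ck)$ on an event of large probability.

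Fix $\e, \delta, t > 0$ and pick $C = C(\delta, t)$ large enough that $2|\log{t}|/C < \log{1+\delta}$. Lemma \ref{lem:Pnmaiorquelinear} implies that $\P{P_k > Ck \text{ for all } k > N} \to 1$ as $N \to \infty$, so we can choose $N = N(\e, \delta, t)$ for which this event has probability at least $1 - \e$. On this event, for every $x > N$,
\begin{align*}
\sum_{k \in I}\frac{1}{P_k} \leq \frac{1}{C}\sum_{k \in I}\frac{1}{k} \leq \frac{|\log{t}|}{C} + \frac{C'}{x},
\end{align*}
where $C'$ absorbs the error coming from the ceilings in the endpoints. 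Choosing $x_0 \geq N$ large enough that the $C'/x$ term is smaller than $\log{1+\delta} - 2|\log{t}|/C$, we conclude $\left|\log{\lambda(xt)/\lambda(x)}\right| < \log{1+\delta}$ for all $x > x_0$, which is exactly \eqref{eq:quantitativelambdaslow}.

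For the almost-sure statement, apply the quantitative estimate with $\e = 2^{-k}$ and $\delta = 1/k$ to each $t$ in a countable dense set $D \subset (0, \infty)$ (say, positive rationals). By Borel--Cantelli, almost surely $\lambda(xt)/\lambda(x) \to 1$ for every $t \in D$. Since $(\lambda_n)_n$ is monotone decreasing, $\lambda(xt)/\lambda(x)$ is monotone in $t$ for each fixed $x$, and sandwiching between elements of $D$ extends the convergence to every $t > 0$, so $(\lambda_n)_n \in \SV$ almost surely. The main difficulty I anticipate is the uniform-in-$x$ control: a pointwise bound at each $x$ would not suffice, but Lemma \ref{lem:Pnmaiorquelinear}, being itself a uniform statement once a threshold $N$ is fixed, propagates directly through the telescoping product and sidesteps this issue.
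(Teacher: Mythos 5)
Your proof is correct and follows essentially the same route as the paper: both turn the ratio into a telescoping product $\prod_{k}(1-1/P_k)$ over $k$ roughly in $(x, xt]$, reduce it to controlling $\sum_{k} 1/P_k$, and invoke Lemma \ref{lem:Pnmaiorquelinear} on a high-probability event to replace $1/P_k$ by $1/(Ck)$ with $C$ large. The only cosmetic differences are that you bound the interior sum by a harmonic tail $\approx |\log{t}|/C$ rather than by a crude count-times-largest-term estimate $\approx (t-1)/C$, and you spell out the Borel--Cantelli plus density argument for the almost-sure conclusion, which the paper leaves implicit.
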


The quantitative equation \ref{eq:quantitativelambdaslow} is reefer as a\textbf{ tightness }result. To be precise, note that $0<\lambda(x)\leq \frac{1}{2}$, and $(\lambda_n)_n$ is a monotonous decreasing sequence. Then, Lemma \ref{Lem:lambdaslowv} shows that the random function $\frac{\lambda(xt)}{\lambda(x)}$ concentrates its probability mass in the compact $\prod_{k=1}^{\lfloor x_0\rfloor}[0,2] \times \prod_{k=\lceil x_0\rceil }^{\infty} [1-\delta,1+\delta]$ (infinite product of compacts). 

\begin{proof}
    This proof uses the Lemma \ref{lem:Pnmaiorquelinear} which proves $P_n$  grows faster than any linear function, to show that the ratio $\frac{\lambda(xt)}{\lambda(x)}$ is close to one with high probability.
    
    For this, fix $t\geq1$, then by the definition \eqref{eq:Definiçãolambdan}:  
    \begin{align*}
    \frac{\lambda(xt)}{\lambda(x)}&=\frac{\prod\limits_{k=1}^{\lfloor xt\rfloor} \left(1-\frac{1}{P_k}\right) }{\prod\limits_{j=1}^{\lfloor x\rfloor} \left(1-\frac{1}{P_j}\right) }=\prod_{k=\lceil x \rceil}^{\lfloor xt\rfloor}\left(1-\frac{1}{P_k}\right).
    \end{align*} Since $P_n$ is a monotonous sequence and bigger than $2$ almost surely, using that $1-x\leq e^{-x}$, one can concludes with $t\geq 1$ that:
    \begin{align}
      \left(1-\frac{1}{P_{\lceil x\rceil}}\right)^{x(t-1)}&\leq\frac{\lambda(xt)}{\lambda(x)}\leq 1,\text{ that implies }\nonumber\\
     \exp\left\{- \frac{x(t-1)}{2P_{\lceil x\rceil}}\right\}&\leq\frac{\lambda(xt)}{\lambda(x)}\leq  1. \label{eq:boundintherateoflambdaslow}
    \end{align}
    
    Using Lemma \ref{lem:Pnmaiorquelinear}, for every $C>0$, exists $n_1=n_1(C,\e)$ such that:
    \begin{align*}
        \P{P_n>Cn,\,\forall n>n_1}= \sum_{k=0}^{n_1} \P{P_k<Ck, \, \{P_n>Cn,\,\forall n>k\}}>1-\e. 
    \end{align*}

    In particular, fixing $t\geq 1$ and any  $\delta>0$, there exists $C=C(\delta,t)$ accordingly so that
    \begin{align*}
        \exp\left\{- \frac{x(t-1)}{2Cx}\right\}>1-\delta,
    \end{align*} now taking $x_0=x_0(\e,\delta,t)=\lceil n_1\rceil$, one get deterministically by equation \eqref{eq:boundintherateoflambdaslow} that if $\{P_n>Cn,\,\forall n>n_1\}$ occurs, then the ratio $\frac{\lambda(xt)}{\lambda(x)}$ belongs to $(1-\delta,1]$, implying that:
    \begin{align*}
        \P{1-\delta \leq \frac{\lambda(xt)}{\lambda(x)}\leq 1,\,\forall x>x_0}\geq  \P{P_n>Cn,  \,\forall n>n_1}\geq 1-\e.
    \end{align*}    
    
    Now, when $t\in(0,1)$, the problem satisfy:
    \begin{align}\label{eq:elavaivoltar}
        1\leq \frac{\lambda(xt)}{\lambda(x)}\leq \left(1-\frac{1}{P_{\lceil x\rceil}}\right)^{-x(1-t)}\leq \exp\left\{\frac{x(1-t)}{P_{\lceil x\rceil }}\right\}
    \end{align} Then, by choosing another value of $C=C(\delta,t)$ accordingly, one gets that there exists $x_0^*=x_0^*(\e,\delta,t)$ such that that: 
    \begin{align*}
        \P{1 \leq \frac{\lambda(xt)}{\lambda(x)}\leq 1+\delta,\,\forall x>x_0^*}\geq 1-\e.
    \end{align*}
    
    Moreover, by equation \eqref{eq:quantitativelambdaslow}, since $\delta$ and $\e$ are arbitrary, $(\lambda_n)_n$ is monotonous, then one gets $\lambda(x)\in \SV$ almost surely. Concluding the proof.
\end{proof}

The above result is also true for some sequences related to $\lambda_n$. Specifically:
\begin{Cor}\label{Cor:lambdaslowv}
     The random  sequences $(\lambda_n^{-1})_n$ and $(\lambda_n)^{-2}$ belongs to $\SV$ with probability one. Moreover, for every $\e>0$, $\delta>0$ and $t>0$, there exists $x_0=x_0(\e,\delta,t)$ such that: 
\begin{align*}
    \P{1-\delta\leq \frac{\lambda^{-1}(xt)}{\lambda^{-1}(x)}\leq 1+\delta, \, \forall x>x_0}>1-\e,\\ 
     \P{1-\delta\leq \frac{\lambda^{-2}(xt)}{\lambda^{-2}(x)}\leq 1+\delta, \, \forall x>x_0}>1-\e.
\end{align*}
\end{Cor}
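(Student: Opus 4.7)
The plan is to reduce directly to Lemma \ref{Lem:lambdaslowv} via the algebraic identities
\begin{align*}
\frac{\lambda^{-1}(xt)}{\lambda^{-1}(x)} = \left(\frac{\lambda(xt)}{\lambda(x)}\right)^{-1}, \qquad \frac{\lambda^{-2}(xt)}{\lambda^{-2}(x)} = \left(\frac{\lambda(xt)}{\lambda(x)}\right)^{-2},
\end{align*}
which are valid pointwise since $\lambda(x) > 0$ almost surely. The content of the corollary is then a purely deterministic transfer of the ratio estimate of Lemma \ref{Lem:lambdaslowv} through the continuous maps $u \mapsto u^{-1}$ and $u \mapsto u^{-2}$ at $u = 1$.

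Fix $\e, \delta, t > 0$. By continuity of $u\mapsto u^{-1}$ at $u=1$, I would pick $\delta' = \delta'(\delta) \in (0,1)$ small enough that $(1+\delta')^{-1} \geq 1 - \delta$ and $(1-\delta')^{-1} \leq 1 + \delta$; likewise, by continuity of $u \mapsto u^{-2}$ at $u=1$, pick $\delta'' = \delta''(\delta) \in (0,1)$ with $(1+\delta'')^{-2} \geq 1 - \delta$ and $(1-\delta'')^{-2} \leq 1 + \delta$. Applying Lemma \ref{Lem:lambdaslowv} with the triple $(\e, \delta', t)$ (resp.\ $(\e, \delta'', t)$) produces $x_0 = x_0(\e, \delta, t)$ so that, with probability at least $1 - \e$, the ratio $\lambda(xt)/\lambda(x)$ lies in $[1-\delta', 1+\delta']$ (resp.\ $[1-\delta'', 1+\delta'']$) for every $x > x_0$. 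On this event the two identities above, combined with the choices of $\delta'$ and $\delta''$, force $\lambda^{-1}(xt)/\lambda^{-1}(x)$ and $\lambda^{-2}(xt)/\lambda^{-2}(x)$ into $[1-\delta, 1+\delta]$ for all $x > x_0$. This yields the quantitative statements of the corollary.

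For the almost sure slowly varying property, I would fix $t > 0$ and apply the quantitative bound along countable sequences $\e_k \downarrow 0$, $\delta_k \downarrow 0$. By a standard Borel-Cantelli / intersection-of-events argument, one obtains $\lim_{x\to \infty} \lambda^{-1}(xt)/\lambda^{-1}(x) = 1$ and $\lim_{x \to \infty} \lambda^{-2}(xt)/\lambda^{-2}(x) = 1$ almost surely for each fixed $t$; taking a countable intersection over rational $t > 0$ and using the monotonicity of $\lambda$ (hence of $\lambda^{-1}$ and $\lambda^{-2}$) to interpolate to general $t > 0$ gives $\lambda^{-1}, \lambda^{-2} \in \SV$ with probability one.

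There is essentially no obstacle: the only point to watch is that the constants $\delta'(\delta)$ and $\delta''(\delta)$ depend only on $\delta$ and not on $\e$ or $t$, so no uniformity is lost when invoking Lemma \ref{Lem:lambdaslowv}. In fact, the same argument shows that any fixed real power $\lambda^{-\alpha}$ is slowly varying almost surely, which may be useful elsewhere in the paper.
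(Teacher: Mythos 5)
Your proof is correct and follows essentially the route the paper has in mind: reduce the quantitative ratio bounds for $\lambda^{-1}$ and $\lambda^{-2}$ to the one for $\lambda$ from Lemma~\ref{Lem:lambdaslowv} by composing with the continuous maps $u\mapsto u^{-1}$ and $u\mapsto u^{-2}$ near $u=1$. The only place where you work harder than necessary is the almost-sure slowly varying claim: once Lemma~\ref{Lem:lambdaslowv} gives $\lambda \in \SV$ on an event of probability one, the pointwise identity $\lambda^{-1}(xt)/\lambda^{-1}(x) = \bigl(\lambda(xt)/\lambda(x)\bigr)^{-1}$ immediately yields $\lambda^{-1},\lambda^{-2}\in\SV$ on the same event, so no separate Borel--Cantelli or countable-intersection argument over $\e_k,\delta_k$ is needed. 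Your closing remark that the argument handles any fixed real power $\lambda^{-\alpha}$ is accurate and subsumes the paper's specific hint (which bootstraps $\lambda^{-2}$ from $\lambda^{-1}$ via $\sqrt{1\pm\delta}$).
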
The proof follows the same ideas of Lemma \ref{Lem:lambdaslowv}, just noticing that if $\sqrt{1-\delta}\leq \frac{\lambda^{-1}(xt)}{\lambda^{-1}(x)}\leq \sqrt{1+\delta}$, then $1-\delta\leq \frac{\lambda^{-2}(xt)}{\lambda^{-2}(x)}\leq 1+\delta$. So, we left this proof as an exercise. 
\bigskip

For slowly varying sequences $(\gamma_n)_n$, it is possible to control the sum of the values $\sum_{k=1}^{n} \frac{1}{\gamma_n}$ with some precision. The theorem that propose such relation is called Karamatha's theorem. In essence, by controlling the fraction $\frac{\gamma^{-1}(xt)}{\gamma(x)}$, it is possible to control the summation of these values, see Theorem 0.6 in S. I. Resnick \cite{Resnick1987}. 

In this paper, it is not possible to directly apply Karamatha's theorem, since the sequence $\lambda_n$ is a distribution over $\SV$, and not just a fixed sequence. To solve this, it is worth to observe the proof of such theorems with some attention. Therefore, a simplified version of the original theorem is proved.

\begin{Prop}[Karamatha]\label{Prop:Karamathalimit1} Consider any deterministic function $\gamma(x)\in \SV$, such that $\gamma(x)> 0$ for all $x>0$, then:

    \textbf{A-} For every $\alpha>0$, one get that $\lim\limits_{x\to \infty} x^{-\alpha}\gamma(x)=0.$
    
    \textbf{B-} If $\gamma(x)$ is a monotonous increasing function, one get that:
    \begin{align*}
        \lim\limits_{x\to \infty}\frac{\sum\limits_{k=1}^x \gamma(k)}{x\gamma(x)}=1.
    \end{align*}
\end{Prop}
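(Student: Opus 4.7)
The plan is to treat Parts A and B separately, and in both cases to leverage slow variation together with a convenient comparison.

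For Part A, I will reduce the problem to controlling $\gamma$ along the geometric subsequence $\{2^n\}_n$, then fill in the remaining values. Fix $\alpha > 0$. Slow variation at $t = 2$ gives $\gamma(2x)/\gamma(x) \to 1$; writing $a_n := \log{\gamma(2^n)}$, this is equivalent to $a_{n+1} - a_n \to 0$, and a Cesaro average then yields $a_n / n \to 0$, i.e.\ $\gamma(2^n) = 2^{o(n)}$. To extend the bound from $\{2^n\}$ to all $x$, I will upgrade the pointwise slow-variation property to uniform convergence of $\gamma(xt)/\gamma(x) \to 1$ on $t \in [1,2]$: for every $\delta > 0$ there exists $M = M(\delta)$ such that $(1-\delta)\gamma(x) \leq \gamma(xt) \leq (1+\delta)\gamma(x)$ for all $x \geq M$ and all $t \in [1,2]$. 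Since $\gamma$ here is piecewise constant and strictly positive, this uniform statement can be produced by a direct monotone-covering argument, lighter than the Egorov-style proof required for general measurable slowly varying functions. Choosing $\delta$ with $\log{1+\delta}/\log{2} < \alpha$ and writing any large $x$ as $x = 2^n y$ with $y \in [M, 2M)$, iterating the uniform bound yields
\[
\gamma(x) \leq (1+\delta)^n \max_{y \in [M, 2M]} \gamma(y) \leq C (x/M)^{\log{1+\delta}/\log{2}} = o(x^\alpha),
\]
which closes Part A.

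For Part B, I split the sum at a small fraction $\e x$ and exploit monotonicity together with slow variation. Monotonicity gives the upper bound $\sum_{k=1}^{x} \gamma(k) \leq x \gamma(x)$ directly, so the $\limsup$ of the ratio is at most $1$. For the lower bound, fix $\e \in (0,1)$ and bound
\[
\sum_{k=1}^{x} \gamma(k) \ \geq\ \sum_{k=\lceil \e x \rceil}^{x} \gamma(k) \ \geq\ (x - \lceil \e x \rceil)\, \gamma(\lceil \e x \rceil)
\]
again by monotonicity. Dividing by $x \gamma(x)$ and invoking slow variation at $t = \e$ (so that $\gamma(\lceil \e x \rceil)/\gamma(x) \to 1$), we obtain $\liminf \geq 1 - \e$; letting $\e \downarrow 0$ finishes the proof.

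The main obstacle will be the uniform-convergence step in Part A. Pointwise slow variation controls ratios only at the sparse scales $\{2^n\}$, and one genuinely needs uniformity in $t \in [1,2]$ in order to interpolate across each consecutive dyadic gap and promote the bound $\gamma(2^n) = 2^{o(n)}$ to a bound on all $x$. Fortunately, in this sequence-valued, strictly positive setting I expect a short self-contained argument exploiting the monotone structure of the rescaling map $t \mapsto xt$ on $[1,2]$ to suffice, circumventing the full measurable Karamata theorem.
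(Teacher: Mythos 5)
Your proof is correct, but you have talked yourself into a difficulty in Part A that does not actually exist, and the resolution is exactly the mechanism the paper uses. Once you write $x = 2^n y$ with $y \in [M,2M)$ and chain
\[
\frac{\gamma(x)}{\gamma(y)} = \prod_{j=0}^{n-1}\frac{\gamma(2^{j+1}y)}{\gamma(2^{j}y)} \leq (1+\delta)^n,
\]
every factor is the ratio at the \emph{single} fixed scale $t=2$, evaluated at points $2^jy \geq M$. So the pointwise statement ``$\gamma(2z)/\gamma(z)\leq 1+\delta$ for all $z\geq M$'' is all you need; the Uniform Convergence Theorem over $t\in[1,2]$, and therefore the ``monotone-covering argument'' you flag as the main obstacle, is superfluous. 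The Cesaro/$2^{o(n)}$ detour is likewise unnecessary once you have this chain. This is precisely how the paper argues, except it keeps $t>1$ general and telescopes $\gamma(x^*t^n)/\gamma(x^*)$ over $x^*\in(x_1,tx_1]$ before taking a finite supremum; the two are the same idea. For Part B your argument is fine and in fact slightly cleaner than the paper's: you get the upper bound $\sum_{k\leq x}\gamma(k)\leq x\gamma(x)$ in one line from monotonicity and obtain the lower bound by a single split at $\lceil\e x\rceil$ together with slow variation at $t=\e$, whereas the paper partitions $[1,x]$ into $\ell$ equal blocks and uses the slow-variation bound at the scales $j/\ell$ for $j=1,\dots,\ell$ on both sides. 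Both give $\liminf\geq 1-\e$ (respectively $\geq (1-\delta)(\ell-1)/\ell$) and conclude by letting the parameter tend to its extreme; the paper's $\ell$-partition is a bit of overkill when monotonicity is assumed, though it has the virtue of producing the quantitative two-sided bound in equation \eqref{eq:lowerboundwithkaramataslow} that is reused verbatim in Proposition \ref{Prop:highprobkaramatharate}.
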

\begin{proof}
     To prove affirmation \textbf{A} with fixed $\gamma\in \SV$, take any $t>1$, $\alpha>0$, and find $\delta>0$ that satisfies $1+\delta<t^{\alpha}$.  Using the definition of slowly varying functions, there exists $x_1=x_1(\delta,t)$ such that for every $x>x_1$:
    \begin{align}\label{eq:lambdaugrowsnotpoly}
        \frac{\gamma(xt)}{\gamma(x)}\leq 1+\delta.
    \end{align} This implies that for any $x>x_1$, there exist $x^*=x^*(x,x_1)$ and an integer $n\geq 0$, such that $x=x^*t^n$ for some $x^*\in(x_1,tx_1]$; Thus:
    \begin{align}
        0\leq \frac{\gamma(x)}{x^{\alpha}}=\frac{\gamma(x^*t^n)}{(x^*t)^{\alpha n}}&=\frac{1}{(x^*t)^{\alpha n}}  \frac{\gamma(x^*t^n)}{\gamma(x^*t^{n-1})}\cdots \frac{\gamma(x^*t)}{\gamma(x^*)} \gamma(x^*)\nonumber\\\label{eq:boundsslowlyvaryingpolyequationsup}
        &\leq \frac{\gamma(x^*)}{{x^*}^{\alpha}}\left(\frac{1+\delta}{t^{\alpha}}\right)^n\leq \left(\frac{1+\delta}{t^{\alpha}}\right)^n\sup_{y\in(x_1,tx_1]} \frac{\gamma(y)}{y^{\alpha}}.
     \end{align} Since $1+\delta<t^{\alpha}$, one can conclude that $\frac{1+\delta}{t^{\alpha}}<1$. Then, taking $x$ growing to $\infty$, since the supreme is a finite number, one gets that $x^{-\alpha}\gamma(x)$ converges to zero. Finishing the proof of \textbf{A}.

    The proof \textbf{B} consists in finding an upper and lower bound for the summation $\sum\limits_{k=1}^x \gamma(k)$. Fixing any integer $\ell>1$ and $\delta>0$, using the fact that $\gamma(x)\in \SV$, there exists $x_0(\delta,\ell)$ such that for every $x>x_0$:
    \begin{align}\label{eq:lambdaboundstoimplykara}
        1-\delta\leq  \frac{\gamma\left(x\frac{j}{\ell}\right)}{\gamma(x)}\leq  1+\delta, \quad \forall j\in\{1,\cdots,\ell\}.
    \end{align} Therefore, by the monotonicity of $\gamma(x)$, it is possible to conclude that: 
    \begin{align*}
        \frac{\sum\limits_{k=1}^x \gamma(k)}{x\gamma(x)}&= \sum_{j=0}^{\ell-1} \frac{1}{x\gamma(x)}\sum_{k=\frac{j}{\ell}x+ 1}^{\frac{j+1}{\ell}x} \gamma(k)  \leq \sum_{j=0}^{\ell -1} \frac{\frac{x}{\ell}\gamma \left(x\frac{j+1}{\ell}\right)}{x\gamma(x)}\leq 1+\delta.
    \end{align*} For the other side, note that:
    \begin{align*}
        \frac{\sum\limits_{k=1}^x \gamma(k)}{x\gamma(x)}\geq \frac{\sum\limits_{k=1}^{\frac{x}{\ell}} \gamma(k)}{x\gamma(x)}+ \frac{\ell-1}{\ell}(1-\delta) \geq \frac{\ell-1}{\ell}(1-\delta).  
    \end{align*} In particular, for any $\ell>0$ fixed and $\delta>0$, there exists a $x_0(\delta,\ell)$ such that for every $x>x_0$: 
    \begin{align}\label{eq:lowerboundwithkaramataslow}
         (1-\delta)- \frac{(1-\delta)}{\ell}\leq \frac{\sum_{k=1}^x\gamma(k)}{x\gamma(x)}&\leq 1+\delta.  
    \end{align} The proof of affirmation \textbf{B} follows by choosing the value of $\delta$ and $\ell$ accordingly. 
\end{proof}

\begin{Rmk}
    The theorem proposed in Proposition \ref{Prop:Karamathalimit1} is simpler than the actual Karamatha's theorem, for more detail see Theorem 0.6 of \cite{Resnick1987}.
\end{Rmk}

Now, back to the random sequence $\lambda(x)$, it is possible to approximate Karamatha's theorems with high probability. Specifically:

\begin{Prop}\label{Prop:highprobkaramatharate}
    For all $\e>0$ and $\delta>0$, there exists $x_0=x_0(\e,\delta)$ such that:
    \begin{align}\label{eq:highprobkaramatharate}
        \P{1-\delta \leq\frac{\sum_{k=1}^x \frac{1}{\lambda(k)}}{\frac{x}{\lambda(x)}}\leq 1+\delta, \, \forall x>x_0}>1-\e.
    \end{align} Moreover, for every $\alpha>0$ and $\eta>0$, there exists $x_1=x_1(\e,\eta,\alpha)$ such that: 
    \begin{align}\label{eq:highprobdonotgrowfasterthanpoly}
         \P{x^{-\alpha} \frac{1}{\lambda(x)}\leq \eta,\, \forall x>x_1}>1-\e.
    \end{align}
\end{Prop}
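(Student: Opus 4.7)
The plan is to apply the high-probability tightness of $\lambda^{-1}$ as a slowly-varying sequence (Corollary \ref{Cor:lambdaslowv}) to transfer the deterministic Karamata argument of Proposition \ref{Prop:Karamathalimit1} onto a single large-probability event, and to deduce the no-polynomial-growth bound from Lemma \ref{lem:Pnmaiorquelinear} directly.

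For equation \eqref{eq:highprobkaramatharate}, given $\e,\delta>0$, I would choose an integer $\ell$ large and $\delta'>0$ small enough that the sandwich appearing in the proof of Proposition \ref{Prop:Karamathalimit1}(B) yields a spread smaller than $\delta$ (explicitly, $\ell$ with $(\ell-1)/\ell>1-\delta/2$ and $\delta'<\delta/2$). Invoking Corollary \ref{Cor:lambdaslowv} separately for each of the finitely many scales $t\in\{j/\ell:1\leq j\leq\ell\}$ with tolerance $\delta'$ and probability loss $\e/\ell$, and taking the maximum of the resulting thresholds, yields a deterministic $x_0$ and an event $A$ with $\P{A}>1-\e$ on which
\begin{align*}
    1-\delta'\leq \frac{\lambda^{-1}(xj/\ell)}{\lambda^{-1}(x)}\leq 1+\delta',\qquad \forall x>x_0,\;\forall j\in\{1,\ldots,\ell\}.
\end{align*}
Since $\lambda_n$ is non-increasing by \eqref{eq:Definiçãolambdan}, the sequence $\lambda^{-1}$ is non-decreasing almost surely. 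On $A$, the block decomposition of $\{1,\ldots,\lfloor x\rfloor\}$ into $\ell$ equal sub-intervals and the sandwich bound of Proposition \ref{Prop:Karamathalimit1}(B) then apply with $\gamma$ replaced by $\lambda^{-1}$ and produce the two-sided ratio bound of \eqref{eq:highprobkaramatharate}.

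For equation \eqref{eq:highprobdonotgrowfasterthanpoly}, I would bypass the iteration of Proposition \ref{Prop:Karamathalimit1}(A) and instead use Lemma \ref{lem:Pnmaiorquelinear}. The elementary inequality $1-y\geq\exp(-y/(1-y))$ for $y\in[0,1)$, applied with $y=1/P_k$, gives
\begin{align*}
    \frac{1}{\lambda_n}\leq \exp\left(\sum_{k=1}^n\frac{1}{P_k-1}\right).
\end{align*}
Fix $C>1/\alpha$ and apply Lemma \ref{lem:Pnmaiorquelinear}: there exists $n_1$ and an event of probability $>1-\e$ on which $P_k>Ck$ for every $k>n_1$. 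On this event, the above sum is bounded by a finite random constant $K$ (from the first $n_1$ terms) plus $(1/C)\log(n)+O(1)$, so $1/\lambda_n\leq e^{K+O(1)}\,n^{1/C}$. Since $1/C<\alpha$, choosing $x_1$ sufficiently large forces $x^{-\alpha}/\lambda(x)\leq\eta$ uniformly for $x>x_1$ on this event.

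The subtle point is the uniformity in $x$ of the slowly-varying bound required for Part 1: the quantitative Corollary \ref{Cor:lambdaslowv} is what supplies this uniformity for each fixed scale, and the restriction to the finite set of scales $\{j/\ell\}$ is essential so that a finite union bound preserves the high probability $1-\e$. An alternative route for Part 2 via the iteration of Proposition \ref{Prop:Karamathalimit1}(A) would also work, but would require controlling a random prefactor of the form $\sup_{y\in(x_0,tx_0]}\lambda^{-1}(y)/y^\alpha$; going through Lemma \ref{lem:Pnmaiorquelinear} circumvents this and yields a cleaner argument.
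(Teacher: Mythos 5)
Your treatment of \eqref{eq:highprobkaramatharate} matches the paper's: Corollary \ref{Cor:lambdaslowv} at the finitely many scales $j/\ell$, a union bound to keep the failure probability below $\e$, and then the pathwise sandwich of Proposition \ref{Prop:Karamathalimit1}(B) on the resulting high-probability event. Your treatment of \eqref{eq:highprobdonotgrowfasterthanpoly}, however, is genuinely different and more elementary. The paper runs the dyadic iteration of Proposition \ref{Prop:Karamathalimit1}(A), applying Corollary \ref{Cor:lambdaslowv} at the single scale $t=2$; this leaves the random prefactor $\sup_{y\in(x_1',2x_1']}\frac{1}{y^\alpha\lambda(y)}$, which the paper then absorbs by a Markov-inequality step. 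You bypass the iteration entirely by combining $1-y\geq\exp(-y/(1-y))$ with Lemma \ref{lem:Pnmaiorquelinear} to get $\lambda_n^{-1}\leq e^{K}n^{1/C}$ directly on the event $\{P_k>Ck,\ \forall k>n_1\}$, then choosing $C>1/\alpha$. One point you should make explicit: you call $K=\sum_{k\leq n_1}(P_k-1)^{-1}$ a ``random constant'' and then pick a deterministic $x_1$, which requires knowing that $K$ is bounded uniformly in $\omega$. This is true because the increments in \eqref{eq:DefiniçãoPn} are at least $1$ and $P_1=2$, so $P_k\geq k+1$ and $K\leq\sum_{k\leq n_1}1/k$, a constant depending only on $n_1$; without that remark you would have to absorb a $\{K\leq M\}$ event into the failure probability, essentially reinstating the paper's Markov step and losing the simplification you claim.
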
   
\begin{proof}
    Using Corollary \ref{Cor:lambdaslowv}, for any $\e>0$, $\delta>0$, and $\ell>0$, there exists $x_0=x_0(\e,\delta,\ell)$ such that
    \begin{align}\label{eq:primeirolimirfracolambda}
        \P{1-\delta \leq  \frac{\lambda^{-1}\left(x\frac{j}{\ell}\right)}{\lambda^{-1}(x)}\leq  1+\delta, \forall x>x_0,\, \forall j\in\{1,\cdots,\ell\}}>1-\e.
    \end{align} Then, by Karamathas's Proposition \ref{Prop:Karamathalimit1}, more precisely by equation \eqref{eq:lowerboundwithkaramataslow}, it is true that: 
    \begin{align*}
          \P{1-\delta- \frac{(1-\delta)}{\ell} \leq\frac{\sum_{k=1}^x \frac{1}{\lambda(k)}}{\frac{x}{\lambda(x)}}\leq 1+\delta, \, \forall x>x_0}>1-\e.
    \end{align*} Again, by choosing $\delta$ and $\ell$ accordingly, one can conclude the equation \eqref{eq:highprobkaramatharate} as desired. 

    Now, to show equation \eqref{eq:highprobdonotgrowfasterthanpoly}, with fixed $\alpha>0$, $\eta>0$ and $\e>0$, fix $t=2$, and let $\delta=\delta(\alpha)<2^{\alpha}-1$. In particular, one get by Corollary \ref{Cor:lambdaslowv}, that there exists $x_1'=x_1'(\e,\delta(\alpha))$ such that: 
    \begin{align*}
        \P{\frac{\frac{1}{\lambda(2x)}}{\frac{1}{\lambda\left(x\right)}}\leq  1+\delta, \forall x>x_1'}>1-\frac{\e}{2}.
    \end{align*} Therefore, by equation \eqref{eq:boundsslowlyvaryingpolyequationsup} of Proposition \ref{Prop:Karamathalimit1}, it is possible to concludes that: 
    \begin{align*}
        \P{\frac{1}{x^{\alpha}\lambda(x)}\leq \eta, \forall x>x_1'} \geq \P{\left(\frac{1+\delta}{2^{\alpha}}\right)^n\sup_{y\in(x_1',2x_1']} \frac{1}{y^{\alpha}\lambda(y)}<\eta}.
    \end{align*}

    Finally, since $\lambda$ is monotonous, notice that $\sup_{y\in(x_1',2x_1']} \frac{1}{y^{\alpha}\lambda(y)}< \left(\lambda(2x_1')(x_1')^{\alpha}\right)^{-1}$ is a random variable with a finite mean. So, using the Markov inequality, it is possible to find $M=M(x_1',\alpha)$ such that:
    \begin{align*}
        \P{\sup_{y\in(x_1',2x_1']} \frac{\lambda(y)}{y^{\alpha}}<M}>1-\frac{\e}{2}.
    \end{align*} In particular, with $M>0$ fixed, one gets that exists $x_1=x_1(\e,\eta,\alpha)$ such that: 
    \begin{align*}
        \P{\frac{\lambda(x)}{x^{\alpha}}\leq \eta, \forall x>x_1}>1-\e. 
    \end{align*} As desired. 
\end{proof}

Moreover, using analogous arguments, it is possible to proof the same result for the sequence $\lambda^{-2}(x)$, that is:
\begin{Cor}\label{Cor:highprobkaramatharate}
    For all $\e>0$ and $\delta>0$, there exists $x_0=x_0(\e,\delta)$ such that:
    \begin{align*}
        \P{1-\delta \leq\frac{\sum_{k=1}^x \frac{1}{\lambda^2(k)}}{\frac{x}{\lambda^2(x)}}\leq 1+\delta, \, \forall x>x_0}>1-\e.
    \end{align*} Moreover, for every $\alpha>0$ and $\eta>0$, there exists $x_1=x_1(\e,\eta,\alpha)$ such that: 
    \begin{align*}
         \P{x^{-\alpha} \frac{1}{\lambda^2(x)}\leq \eta,\, \forall x>x_1}>1-\e.
    \end{align*}
\end{Cor}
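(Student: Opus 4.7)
The plan is to repeat the proof of Proposition \ref{Prop:highprobkaramatharate} almost verbatim, substituting $\lambda^{-1}$ with $\lambda^{-2}$ throughout. The key observation is that $\lambda^{-2}$ inherits every structural property used in that proof: it is positive and monotone increasing (since $\lambda$ is a product of factors in $(0,1)$, hence positive and monotone decreasing), and by Corollary \ref{Cor:lambdaslowv} it satisfies exactly the same quantitative tightness in $\SV$ that $\lambda^{-1}$ does. These two facts together guarantee that the deterministic Karamata Proposition \ref{Prop:Karamathalimit1} can be applied pointwise on the good event.

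For the first bound I would fix $\e,\delta>0$, pick an auxiliary integer $\ell$, and invoke Corollary \ref{Cor:lambdaslowv} at the test ratios $j/\ell$ with $j\in\{1,\dots,\ell\}$ to obtain $x_0$ such that $\lambda^{-2}(xj/\ell)/\lambda^{-2}(x)\in[1-\delta,1+\delta]$ holds simultaneously with probability at least $1-\e$. On this event, applying the deterministic two-sided estimate \eqref{eq:lowerboundwithkaramataslow} from Proposition \ref{Prop:Karamathalimit1} with $\gamma=\lambda^{-2}$ yields
\[
(1-\delta)-\frac{1-\delta}{\ell}\leq \frac{\sum_{k=1}^x \lambda^{-2}(k)}{x\,\lambda^{-2}(x)}\leq 1+\delta
\]
for every $x>x_0$, after which $\ell$ is chosen large and $\delta$ small to absorb the errors, exactly as in Proposition \ref{Prop:highprobkaramatharate}.

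For the polynomial-growth bound I would fix $\alpha,\eta,\e>0$, pick $\delta=\delta(\alpha)<2^{\alpha}-1$, and use Corollary \ref{Cor:lambdaslowv} with $t=2$ to find $x_1'$ such that $\lambda^{-2}(2x)/\lambda^{-2}(x)\leq 1+\delta$ for all $x>x_1'$ with probability at least $1-\e/2$. Writing $x=x^{\ast}2^{n}$ with $x^{\ast}\in(x_1',2x_1']$, the iterative estimate \eqref{eq:boundsslowlyvaryingpolyequationsup} then bounds $\lambda^{-2}(x)/x^{\alpha}$ by $((1+\delta)/2^{\alpha})^{n}\sup_{y\in(x_1',2x_1']}\lambda^{-2}(y)/y^{\alpha}$. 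The supremum is almost surely finite, since it depends only on $P_1,\dots,P_{2x_1'}$, so tightness provides a deterministic $M$ bounding it with probability at least $1-\e/2$; the geometric decay of $((1+\delta)/2^{\alpha})^{n}$ then delivers $\lambda^{-2}(x)/x^{\alpha}\leq\eta$ for every $x$ beyond some $x_1=x_1(\e,\eta,\alpha)$.

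I do not anticipate any genuine obstacle: the only verifications needed are the monotonicity and positivity of $\lambda^{-2}$ (both immediate) and the almost-sure finiteness of $\sup_{y\in(x_1',2x_1']}\lambda^{-2}(y)/y^{\alpha}$, which follows from $P_k$ being finite almost surely for every fixed $k$. The rest is a direct copy of the argument already written for $\lambda^{-1}$.
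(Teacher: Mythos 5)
Your proposal is correct and matches exactly what the paper intends, since the paper simply states the proof is ``analogous to the proof of Proposition \ref{Prop:highprobkaramatharate}'' and leaves it as an exercise; you have spelled out that analogy faithfully. The two ingredients you verify -- monotonicity/positivity of $\lambda^{-2}$ and the $\SV$ tightness from Corollary \ref{Cor:lambdaslowv} -- are precisely what is needed to run the Karamata argument and the dyadic iteration word-for-word.
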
  The proof of this Corollary is analogous to the proof of Proposition \ref{Prop:highprobkaramatharate}, and it is left as an exercise. 

\section{Concentration results}\label{sec:Concentration}\noindent 

This subsection will demonstrate two different types of concentration for the random sequence $P_n$. Both results show that $P_n$ is concentrated around the random variable $\sum_{k=1}^n \lambda^{-1}_k$. They follow the same structure: In the quenched space, fixing any slowly varying sequence of parameters, it is possible to apply classical concentration results in the sum of geometric random variable; then using the tightness of the sequence $(\lambda_n)_n$ in the slowly varying space, it is possible to obtain such a concentration in the annealed space.  Specifically, the first result in Lemma \ref{Lem:firstconcentration} relies on Chebychev's inequality, and the second result in Proposition \ref{Prop:secondconcentrationgood} is proven using a version of McDiarmid's inequality, which makes it more precise.

\begin{Lem}\label{Lem:firstconcentration}
    Consider the dependent random variables $P_n$ and $\sum_{k=1}^n \frac{1}{\lambda_k}$, where $\lambda_k$ is equal to $\prod_{j=1}^k\left(1-\frac{1}{P_j}\right)$. For every $\alpha>1$, it is true that:
    \begin{align*}
        \lim_{n\to \infty}\P{\left|P_n-\sum_{k=1}^n \frac{1}{\lambda_k}\right|>\sqrt{n^{\alpha}}}=0.
    \end{align*}
\end{Lem}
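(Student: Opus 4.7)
The plan is to recognize that $P_n - \sum_{k=1}^{n-1} \tfrac{1}{\lambda_k} - 2$ is a martingale with respect to $(\mathcal{F}_n)_n$, and then bound its fluctuations with a Chebyshev estimate after truncating to a high-probability event where the random parameters $\lambda_k$ are polynomially controlled.

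First, I would set $M_n := P_n - \sum_{k=1}^{n-1} \tfrac{1}{\lambda_k} - 2$ and write $G_k := P_{k+1}-P_k$. The recursion \eqref{eq:DefiniçãoPn} together with the conditional independence \eqref{eq:independentgeogiven} says that, given $\mathcal{F}_k$, $G_k \sim \G{\lambda_k}$. Since $\lambda_k$ is $\mathcal{F}_k$-measurable and $\Ex{\G{p}} = 1/p$, $\Var{\G{p}}=(1-p)/p^2$, the differences $G_k - 1/\lambda_k$ are martingale increments with
\[
\Ex{(M_{k+1}-M_k)^2 \,\big|\, \mathcal{F}_k} \;=\; \frac{1-\lambda_k}{\lambda_k^2} \;\leq\; \frac{1}{\lambda_k^2}.
\]

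Next, given $\alpha>1$, I would fix $\e>0$ with $1+2\e<\alpha$ and a small $\delta>0$. Equation \eqref{eq:highprobdonotgrowfasterthanpoly} of Proposition \ref{Prop:highprobkaramatharate}, applied with exponent $\e$ and $\eta=1$, combined with the fact that $1/\lambda_k$ is a.s.\ finite for each fixed $k$, produces a deterministic constant $C_\delta$ such that the event $\Omega_\delta := \{1/\lambda_k \leq C_\delta \vee k^\e,\, \forall k\geq 1\}$ has $\P{\Omega_\delta} \geq 1-\delta$. Introducing the stopping time $\tau := \inf\{k\geq 1 : 1/\lambda_k > C_\delta \vee k^\e\}$, so that $\Omega_\delta = \{\tau=\infty\}$, optional stopping and the increment bound give
\[
\Ex{M_{n\wedge\tau}^2} \;\leq\; \sum_{k=1}^{n-1} \Ex{\tfrac{1}{\lambda_k^2}\,\mathbf{1}\{\tau>k\}} \;\leq\; \sum_{k=1}^{n-1}\bigl(C_\delta\vee k^\e\bigr)^2 \;=\; O\bigl(n^{1+2\e}\bigr).
\]
Chebyshev's inequality then yields $\P{|M_{n\wedge\tau}| > \tfrac{1}{2}n^{\alpha/2}} = O(n^{1+2\e-\alpha}) \to 0$, while $\{M_{n\wedge\tau}\neq M_n\}\subseteq\{\tau\leq n\}\subseteq \Omega_\delta^c$ has probability at most $\delta$. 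The remaining gap between $\sum_{k=1}^{n-1}\tfrac{1}{\lambda_k}$ and $\sum_{k=1}^{n}\tfrac{1}{\lambda_k}$ is the single term $\tfrac{1}{\lambda_n}$, which on $\Omega_\delta$ is bounded by $C_\delta\vee n^\e = o(n^{\alpha/2})$ and is therefore absorbed. Sending $n\to\infty$ and then $\delta\to 0$ closes the argument.

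The main obstacle is that $\lambda_k$ is itself $\mathcal{F}_k$-measurable and random, so a direct second-moment computation would require controlling $\Ex{1/\lambda_k^2}$, for which we have no \emph{a priori} estimate. The stopping time $\tau$ circumvents this by converting the random conditional-variance bound into a deterministic polynomial one at the cost of a negligible probability of failure; this is exactly where the Karamata-type tightness of Proposition \ref{Prop:highprobkaramatharate} enters. Once the truncation is in place, the argument reduces to a textbook Chebyshev estimate for a martingale.
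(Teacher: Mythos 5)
Your proof is correct, and it takes a genuinely different route from the paper. The paper's proof is a quenched/annealed argument: it fixes a deterministic slowly varying sequence $\gamma$, applies Chebyshev to the independent sum $S_n(\gamma)$, writes the target probability as an integral over the law of $(\lambda_1,\ldots,\lambda_n)$, and then uses all three tightness events from Proposition~\ref{Prop:highprobkaramatharate} and Corollary~\ref{Cor:highprobkaramatharate} (Karamata averages for $\lambda^{-1}$ and $\lambda^{-2}$, plus the polynomial bound) to make the quenched Chebyshev bound uniform over the high-probability set of sequences. Your version instead exploits the martingale structure of $M_n = P_n - 2 - \sum_{k<n}\lambda_k^{-1}$ directly, converts the random conditional variance $\lambda_k^{-2}$ into a deterministic polynomial bound via a stopping time built from only the single estimate \eqref{eq:highprobdonotgrowfasterthanpoly}, and applies Chebyshev to the stopped process. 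This is a cleaner and more economical argument: the stopping time subsumes the annealing step and the explicit splitting into ``good'' and ``bad'' sequences, and you avoid needing the Karamata averages entirely because the crude bound $\sum_{k\leq n}(C_\delta\vee k^{\e})^2 = O(n^{1+2\e})$ with $1+2\e<\alpha$ already suffices. (One small simplification available to you: since $P_j \geq j+1$ deterministically, $\lambda_k \geq \tfrac{1}{k+1}$, so $1/\lambda_k$ is in fact uniformly bounded on each finite initial segment, which makes the choice of $C_\delta$ for the small indices trivial and removes any integrability concerns about $M_n$.) The trade-off is that the paper's quenched/annealed template is the one it re-uses for the sharper Proposition~\ref{Prop:secondconcentrationgood} via McDiarmid, so the paper's route is the more reusable scaffold within the article, whereas yours is the more elementary self-contained proof of this particular lemma.
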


\begin{proof}
    Concentration results for the random variable $P_n$ are very difficult to find. The main issue is that a small perturbation in the initial values can lead to very different outcomes due to the dependencies of $P_n$. However, it is possible to study the limiting behavior of this random variable by knowing what is the overall possible behavior of the parameters; then using quenched and annealed techniques, it is possible to get the desired result. 

    Consider $\Gamma=\{(\gamma)_n: \gamma_n\downarrow 0, \gamma(x)\in \SV\}$,  the set of all positive decreasing sequences that also belong to the set of slowly varying sequences. Then, for any fixed sequence $\gamma\in \Gamma$, define:
    \begin{align}
        S_n(\gamma)=\sum_{k=1}^{n} \G{\gamma_n},
    \end{align}  where the geometric random variables are independent. In particular, by some computations:
    \begin{align*}
        \Ex{S_n(\gamma)}= \sum_{k=1}^{n} \frac{1}{\gamma_k},\text{ and }
        \Var{S_n(\gamma)}=\sum_{k=1}^n \frac{1-\gamma_k}{\gamma_k^2}.
    \end{align*} Since $(\gamma_n)_n$ is slowly varying, the sequences $\left(\frac{1}{\gamma_n}\right)_n$ and $\left(\frac{1}{\gamma_n^2}\right)_n$ are also slowly varying. This means, using Proposition \ref{Prop:Karamathalimit1}, that:
    \begin{align} \label{eq:limdeterministicomeanv}
        \lim_{n\to \infty}\frac{\Ex{S_n(\gamma)}}{n\frac{1}{\gamma_n}}=1, \text{ and }
        \lim_{n\to \infty}\frac{\Var{S_n(\gamma)}}{n\frac{1}{\gamma_n^2}}=1.
    \end{align} So, for every $\alpha>1$, using Chebyshev's inequality, Proposition \ref{Prop:Karamathalimit1} and the limit of equation \eqref{eq:limdeterministicomeanv}, one get that:
    \begin{align}\label{eq:boundpolyinthelimitgamma}
        \lim_{n\to \infty}\P{\left|S_n(\gamma)-\sum_{k=1}^n \frac{1}{\gamma_k}\right|>\sqrt{n^{\alpha}}}\leq \lim_{n\to \infty} \frac{\Var{S_n(\gamma)}}{n\frac{1}{\gamma_n^2}} \frac{1}{n^{\alpha-1} \gamma^2_n}= 0. 
    \end{align} 

    To apply these results in the distribution of $\lambda_n$, consider $(\gamma_k)_{k=1}^n$ the first $n$ terms that belong to the sequence $\gamma\in \Gamma$, set $d\lambda_n$ a measure over $[0,1]^n$ defined as: 
    \begin{align*}
        \P{\lambda_1=\gamma_1,\cdots, \lambda_n=\gamma_n}= \int_{[0,1]^n} \ind\{\vec{x}=(\gamma_1,\cdots, \gamma_n)\} d\lambda_n(\vec{x}).
    \end{align*} In particular, one have that: 
    \begin{align}\label{eq:Integralrelationquencedsd}
        \P{\left|P_n-\sum_{k=1}^{n} \frac{1}{\lambda_k}\right|>\sqrt{n^{\alpha}}}= \int_{[0,1]^n}\P{\left|S_n(\gamma)-\sum_{k=1}^{n} \frac{1}{\gamma_k}\right|>\sqrt{n^{\alpha}}}  d\lambda((\gamma_k)_{k=1}^n).
    \end{align}

    It is proved that for each fixed $\gamma$, the probability in question goes to zero. This does not solve the problem, since the limit might not be uniform in $n$, and $\lambda_n$ can give mass to sequences that is not possible to be concentrated at time $n$. But this is not the case. Proposition \ref{Prop:highprobkaramatharate} proves a tightness condition for the random variable $\lambda_n$, that for every $\e>0$, $\delta>0$, $\eta>0$ and $\alpha>1$, one get that exists $n_0=n_0(\e,\delta,\eta,\alpha)$ such that: 
    \begin{align}\label{eq:t1}
        \P{1+\delta \geq\frac{\sum_{k=1}^n \frac{1}{\lambda(k)}}{\frac{n}{\lambda_n}}\geq 1-\delta, \, \forall n>n_0}>1-\frac{\e}{3}.\\
        \P{1+\delta \geq\frac{\sum_{k=1}^n \frac{1}{\lambda^2(k)}}{\frac{n}{\lambda_n^2}}\geq 1-\delta, \, \forall n>n_0}>1-\frac{\e}{3}.\label{eq:t2}\\
        \P{n^{1-\alpha} \frac{1}{\lambda_n^2}\leq \eta,\, \forall n>n_0}>1-\frac{\e}{3}.  \label{eq:t3}  
    \end{align} 
    
    All the events have high probability, then the intersection of all three events is going to have a probability greater then $1-\e$. Moreover,  by fixing any sequence $(\gamma_n)_n$ that satisfies all the events, one obtains for every $n>n_0$, using Chebyshev's inequality:
    \begin{align}\label{eq:boundinthequenchedwordprob}
        \P{\left|S_n(\gamma)-\sum_{k=1}^{n} \frac{1}{\gamma_k}\right|>\sqrt{n^{\alpha}}}\leq \frac{(1+\delta)n \frac{1}{\gamma_n^2}}{n\frac{1}{\gamma_n^2}} \frac{1}{n^{\alpha-1} \gamma_n^2} \leq \eta(1+\delta). 
    \end{align}

    Therefore, in the set $\Gamma$ of slowly varying sequences, there exist two disjoint sets, the set formed by sequences that satisfies all tightness events in equations \eqref{eq:t1},\eqref{eq:t2} and \eqref{eq:t3}; and the set of sequences that do not. In the set of sequences that satisfies all the events, the probability of $\P{\left|S_n(\gamma)-\sum_{k=1}^{n} \frac{1}{\gamma_k}\right|>\sqrt{n^{\alpha}}}$ is bounded by $\eta(1+\delta)$. In sequences that do not satisfy one of the events, the measure $d\lambda_n$ gives a mass less than $\e>0$.  Thus, by equation \eqref{eq:Integralrelationquencedsd}, for every $n>n_0$, one get:  
    \begin{align*}
        \P{\left|P_n-\sum_{k=1}^{n} \frac{1}{\lambda_k}\right|>\sqrt{n^{\alpha}}}\leq (1+\delta)\eta +\e. 
    \end{align*} Since $\e$, $\delta$ and $\eta$ are arbitrary values, taking all close to zero and choosing $n_0$ large enough. The concentration is proved as desired. 
\end{proof}

The result of Lemma \ref{Lem:firstconcentration} is not strong enough. We can not have any information about the distribution of $P_n$ and the distribution of the sum $\sum_{k=1}^n \frac{1}{\lambda_k}$ if they eventually come close together. To make a sharper result, consider the following version of McDiarmid's inequality \cite{McDiarmid1989} for Geometric random variables. This proof will significantly improve the polynomial decay of equation \ref{eq:boundpolyinthelimitgamma}, making it summable. 

\begin{Prop}
    Take $(\gamma_n)_n$ any monotonous decreasing sequence such that $\gamma(x)\in \SV$. Define the independent sum of geometric random variables with parameters $\gamma_k$ as $S_n(\gamma)=\sum_{k=1}^{n} \G{\gamma_k}$. Then, for every $\e>0$, $\alpha>1$, there exists $n_0=n_0(\e,\alpha)$ such that: 
    \begin{align*}
        \P{\left|S_n(\gamma)-\sum_{k=1}^n \frac{1}{\gamma_k}\right|>\sqrt{n^{\alpha}},\,\forall n>n_0} \leq \e.
    \end{align*}
\end{Prop}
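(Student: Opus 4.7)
The plan is to upgrade the polynomial Chebyshev bound of Lemma~\ref{Lem:firstconcentration} into an exponential tail bound for the fixed-parameter sum $S_n(\gamma)$ at each $n$, and then combine these via a union bound to control the supremum event. Since each $G_k \sim \G{\gamma_k}$ is unbounded, McDiarmid's bounded-differences inequality does not apply directly; I would handle this by first truncating the $G_k$ and then invoking the classical McDiarmid inequality on the truncated sum. (An alternative is a direct Bernstein-type bound for sub-exponential summands, but the truncation route stays closest to the McDiarmid reference already cited.)

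Fix a constant $K$ (to be chosen later) and set the truncation level $M_n := K\log{n}/\gamma_n$. Because $(\gamma_k)_k$ is monotone decreasing, $\gamma_k \geq \gamma_n$ for all $k \leq n$, hence
\begin{align*}
\P{G_k > M_n} = (1-\gamma_k)^{M_n} \leq \exp(-\gamma_n M_n) = n^{-K},
\end{align*}
and a union bound yields $\P{\max_{k \leq n} G_k > M_n} \leq n^{1-K}$. Define $G_k^{*} := G_k \wedge M_n$ and $S_n^{*}(\gamma) := \sum_{k=1}^{n} G_k^{*}$. On the event $\{\max_{k \leq n} G_k \leq M_n\}$ we have $S_n^{*}(\gamma) = S_n(\gamma)$, and a direct computation shows that $|\Ex{S_n(\gamma) - S_n^{*}(\gamma)}| = \sum_{k=1}^n \Ex{(G_k - M_n)^{+}}$ is exponentially small in $K\log{n}$, hence negligible compared to $\sqrt{n^{\alpha}}$.

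Applying McDiarmid's bounded-difference inequality to $S_n^{*}(\gamma)$ with per-coordinate constants $c_k = M_n$ gives
\begin{align*}
\P{\left|S_n^{*}(\gamma) - \Ex{S_n^{*}(\gamma)}\right| > \tfrac{1}{2}\sqrt{n^{\alpha}}} \leq 2\exp\!\left(-\frac{n^{\alpha-1}\gamma_n^{2}}{2K^{2}(\log{n})^{2}}\right).
\end{align*}
By Proposition~\ref{Prop:Karamathalimit1}(A) applied to $(1/\gamma_n)_n$, for any $\eta > 0$ one has $1/\gamma_n \leq n^{\eta}$ eventually; fixing $\eta$ with $\alpha - 1 - 2\eta > 0$, the exponent above then grows at least like $n^{\alpha - 1 - 2\eta}/(\log{n})^{2}$, which exceeds $n^{\beta}$ for some $\beta > 0$ and all sufficiently large $n$. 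Combining this with the truncation estimate,
\begin{align*}
\P{\left|S_n(\gamma) - \Ex{S_n(\gamma)}\right| > \sqrt{n^{\alpha}}} \leq c_1 \exp(-c_2 n^{\beta}) + n^{1-K},
\end{align*}
which is summable in $n$ once $K$ is chosen at least $3$.

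A final union bound over $n > n_0$, together with the identity $\Ex{S_n(\gamma)} = \sum_{k=1}^{n} 1/\gamma_k$, then pushes the total probability below $\e$ once $n_0$ is large enough. The principal obstacle is the calibration of $M_n$: it must be large enough that $\{\max_{k\leq n} G_k > M_n\}$ is summably rare, yet small enough that McDiarmid's bound defeats $\sqrt{n^{\alpha}}$. The slow variation of $(\gamma_n)_n$, via Proposition~\ref{Prop:Karamathalimit1}(A), is precisely the ingredient that allows $M_n = K\log{n}/\gamma_n$ to succeed on both fronts, and this is also where the strict inequality $\alpha > 1$ enters the argument.
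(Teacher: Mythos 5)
Your argument is correct and follows essentially the same strategy as the paper: truncate the geometric summands so that McDiarmid's bounded-differences inequality applies, use the slow variation of $(\gamma_n)_n$ (via Proposition~\ref{Prop:Karamathalimit1}\,\textbf{A}) to keep both the truncation level $1/\gamma_n$ and the resulting denominator in the exponent subpolynomial, obtain a summable tail, and finish by union bound. The only genuine difference is in the choice of truncation level. You truncate at $M_n = K\log{n}/\gamma_n$, a threshold that is uniform in $k\leq n$ but changes with $n$; consequently the truncated variables $G_k^* = G_k\wedge M_n$ are redefined at each $n$, and the bad truncation event is controlled by a per-$n$ union bound ($\leq n^{1-K}$, summable for $K\geq 3$). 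The paper instead truncates each $G_k$ at the $k$-dependent level $\beta\gamma_k^{-1}\log{k}$, which defines the truncated sequence once and for all and yields a single good event $\{G_k = \G{\gamma_k},\ \forall k\geq 2\}$ of probability at least $1-\e/2$, with $\beta=\beta(\e)$ chosen so that the tail $\sum_k k^{-\beta}$ is already $<\e/2$. The paper's version is marginally cleaner because it avoids recomputing the truncated variables and their expectations for each $n$, while your version avoids the slightly delicate estimate $\Ex{\G{\gamma_k}-G_k}$ being summable in $k$; either bookkeeping closes the proof, and the analytic mechanism (McDiarmid plus the subpolynomial growth of $1/\gamma_n$ and $\log{n}/\gamma_n$) is the same.
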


\begin{proof}
    The proof consists in using McDiarmid's inequality for the sum of independent geometric random variables, but notice that the geometric random variable is unbounded. To apply McDiarmird's inequality, fix $\e>0$, then let $\beta=\beta(\e)>1$ be such that $\frac{1}{2^{\beta}}+\frac{1}{(1-\beta)2^{\beta-1}}<\frac{\e}{2}$. Define the truncated random variable: 
    \begin{align*}
    G_k=\G{\gamma_k} \ind\left\{\G{\gamma_k} \leq \beta\gamma_k^{-1} \ln{(k)} \right\}.    
    \end{align*}

    About the truncated random variable $G_k$, the probability that it is different from $\G{\gamma_k}$   is equal to:
    \begin{align}\label{eq:Gkneqgeosomavel}
        \P{G_k\neq \G{\gamma_k}}=(1-\gamma_k)^{\beta \gamma_k^{-1}\ln{(k)}}\leq \frac{1}{k^\beta}.
    \end{align} Therefore, we can conclude that: \begin{align}\label{eq:Geometricdifferenceboundfinite}
        \P{G_k=\G{\gamma_k},\,\forall k\geq 2}\geq 1- \sum_{k=2}^{\infty} \frac{1}{k^\beta}\geq 1-\frac{1}{2^{\beta}}-\frac{1}{(1-\beta)2^{\beta-1}}>1-\frac{\e}{2}.
    \end{align}

    Now, apply McDiarmid's inequality in the truncate random variable $G_k$. Before doing this, note that since $\gamma(x)\in \SV$, then $\frac{1}{\gamma(x)}$, $\frac{\ln(x)}{\gamma(x)}$ and $\frac{\ln^2(x)}{\gamma^2(x)}$ are also slowly varying. Thus, it is possible to find a $n_1=n_1(\beta,\alpha)$ such that for every $n>n_1$:
    \begin{align}
        \frac{1}{2n^{\frac{\beta-1}{2}}} \left(\frac{1}{\gamma(n)}+ \beta\frac{\log{n}}{\gamma(n)}\right)<1,\label{eq:notgrowingfastthanpoly}\\
        n^{-\alpha/2}\beta^2\frac{\ln^2{n}}{\gamma^2(n)}<1, \label{eq:unifdsboundintheend}\\
        \frac{\sum_{k=1}^n\left(\frac{\ln{k}}{\gamma(k)}\right)^2 }{n\left(\frac{\log{n}}{\gamma(n)}\right)^2}\leq 2. \label{eq:Karamathaintegralcontrol} 
    \end{align}
    
    Computing the expectation of $G_k$, one gets:
    \begin{align*}
        \Ex{G_k}&= 1+ \sum_{k=1}^{\beta\gamma_k^{-1}\ln{k}}\left[(1-\gamma_k)^k- (1-\gamma_k)^{\beta\gamma_k^{-1}\ln{k}}\right]\\
        &= \frac{1}{\gamma_k}- (1-\gamma_k)^{\beta\gamma_k^{-1}\ln{k}}\left(\frac{1}{\gamma_k}+\beta\gamma_k^{-1}\ln{k}\right)\\
        &\leq \Ex{\G{\gamma_k}}- \frac{1}{2k^{\beta}}\left(\frac{1}{\gamma_k}+ \beta\frac{\ln{k}}{\gamma_k}\right).
    \end{align*} So, by equation \eqref{eq:notgrowingfastthanpoly}, since $\beta>1$ and $\beta-\frac{\beta-1}{2}>1$, one finds that $\Ex{\G{\gamma_k}-G_k}$ is summable. So, there exists $C=C(\beta)<\infty$, such that: 
    \begin{align*}
        \lim_{n\to \infty} \Ex{S_n(\gamma)}-\Ex{\sum_{k=1}^n G_k}\leq C<\infty.
    \end{align*} Since $C$ is a finite constant, there exists $n_2'=n_2'(\beta,\alpha)$ such that for every $n>n_2'$, one gets that $C<\frac{1}{2}\sqrt{n^{\alpha}}$.

    Now, using MCDiarmid's inequality for the truncated random variable $G_k$, for any $\alpha>1$, one gets that: 
    \begin{align*}
        \P{\left|\sum_{k=1}^n \left(G_k - \Ex{G_k}\right)\right|>\sqrt{n^{\alpha}}}\leq 2\exp\left\{-\frac{2n^{\alpha}}{\beta^2\sum_{k=1}^n (\gamma_k^{-1}\ln{k})^2}\right\}
    \end{align*} Moreover, by equation \eqref{eq:Karamathaintegralcontrol} and \eqref{eq:unifdsboundintheend} , for $n>n_1$:
    \begin{align*}
        \P{\left|\sum_{k=1}^n \left(G_k - \Ex{G_k}\right)\right|>\sqrt{n^{\alpha}}}\leq 2\exp\left\{-\frac{n^{\alpha-1}}{\beta^2 (\gamma_n^{-1}\log{n})^2}\right\}\leq 2 e^{-n^{\frac{\alpha-1}{2}}}.
    \end{align*}Which is summable. Thus, for every $\e>0$, there exists $n_2=n_2(\e,\alpha,\beta)$ such that:
    \begin{align*}
        \P{\left|\sum_{k=1}^n \left(G_k - \Ex{G_k}\right)\right|>\sqrt{n^{\alpha}},\forall n>n_2}\leq\frac{\e}{2}.
    \end{align*}
    
    Now, taking $n_0=\max\{n_1,n_2,n_2'\}$, by the triangular inequality, it follows that:
    \begin{align*}
         \mathbb{P}\left(\left|S_n(\gamma)-\sum_{k=1}^n \frac{1}{\gamma_k}\right|>\frac{\sqrt{n^{\alpha}}}{2} , \,\forall n>n_0\right) &\leq \frac{\e}{2}+  \P{\left|\sum_{k=1}^n G_k-\sum_{k=1}^n \frac{1}{\gamma_k}\right|>\frac{\sqrt{n^{\alpha}}}{2},\forall n>n_0}\\
         \leq\frac{\e}{2}+  &\P{\left|\sum_{k=1}^n( G_k-\Ex{G_k})\right|>\sqrt{n^{\alpha}},\forall n>n_0}\leq \e.
    \end{align*} As desired.
\end{proof}
\begin{Rmk}
    Assuming Theorem \ref{thm:1} to be true, it is possible to get the truncation random variable and the equation \eqref{eq:Geometricdifferenceboundfinite} to prove that the gap of the objects is bounded by $\ln^2(n)$, as conjectured by Crammer. 
\end{Rmk}

\begin{Prop}\label{Prop:secondconcentrationgood}
   Consider the dependent random variables $P_n$ and $\sum_{k=1}^n \frac{1}{\lambda_k}$, where $\lambda_k$ is equal to $\prod_{j=1}^k\left(1-\frac{1}{P_j}\right)$. For every $\alpha>1$, we have that:
    \begin{align*}
        \P{\left|P_n-\sum_{k=1}^n \frac{1}{\lambda_k}\right|\leq \sqrt{n^{\alpha}},\,\forall n>n_0}\geq 1-\e.
    \end{align*}
\end{Prop}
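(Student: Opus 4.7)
The proof follows the same quenched/annealed pattern as Lemma \ref{Lem:firstconcentration}, but replaces Chebyshev's inequality with the summable McDiarmid-type bound of the preceding proposition; summability is exactly what upgrades a ``for each $n$'' statement into a ``for all $n>n_0$'' statement.

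I first condition on the full trajectory $(\lambda_k)_k$. Let $\mu$ denote its law on $\Gamma$. By \eqref{eq:independentgeogiven}, given $\lambda_k=\gamma_k$ for all $k$, the increments $P_{k+1}-P_k$ are independent geometrics with parameters $\gamma_k$, so that $P_n-2$ is distributed as $S_{n-1}(\gamma)$. The additive constant $2$ and the one-index offset $\sum_{k=1}^{n}\gamma_k^{-1}-\sum_{k=1}^{n-1}\gamma_k^{-1}=\gamma_n^{-1}$ are both $o(\sqrt{n^{\alpha}})$ on a set of $\mu$-mass at least $1-\e/4$ by Corollary \ref{Cor:lambdaslowv}, so it is enough to show that, for some $n_0=n_0(\e,\alpha)$,
\begin{align*}
    \int \P{\left|S_{n}(\gamma)-\sum_{k=1}^n\gamma_k^{-1}\right|>\tfrac{1}{2}\sqrt{n^{\alpha}},\,\exists n>n_0}\,d\mu(\gamma)\leq \tfrac{3\e}{4}.
\end{align*}

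Next I intersect the tightness events of Section \ref{sec:Tightness}: by Lemma \ref{Lem:lambdaslowv}, Corollary \ref{Cor:lambdaslowv}, Proposition \ref{Prop:highprobkaramatharate} and Corollary \ref{Cor:highprobkaramatharate}, for prescribed tolerances one can select a measurable set $A\subseteq \Gamma$ with $\mu(A^c)<\e/4$ on which the slowly varying ratios for $\lambda,\lambda^{-1},\lambda^{-2}$, the Karamatha ratios for $\sum \gamma_k^{-1}$ and $\sum \gamma_k^{-2}$, and the polynomial bound $n^{-\alpha'}\gamma_n^{-2}\leq \eta$ all hold with the \emph{same} constants, chosen in advance as functions of $\e,\alpha$. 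Because the thresholds $n_1,n_2,n_2'$ appearing in the proof of the preceding proposition depend only on $\alpha,\beta$ and on these tightness rates, they can be taken uniform over $\gamma\in A$, giving a single $n_0=n_0(\e,\alpha)$ such that for every $\gamma\in A$
\begin{align*}
    \P{\left|S_n(\gamma)-\sum_{k=1}^n\gamma_k^{-1}\right|>\tfrac{1}{2}\sqrt{n^{\alpha}},\,\exists n>n_0}\leq \tfrac{\e}{2}.
\end{align*}
Splitting the integral according to $\{\gamma\in A\}$ and $\{\gamma\in A^c\}$, the first contribution is at most $\e/2$ by the uniform quenched bound and the second is at most $\mu(A^c)<\e/4$, which, together with the $\e/4$ already spent on the off-by-one correction, closes the argument.

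\textbf{Main obstacle.} The entire argument hinges on making the threshold $n_0$ in the preceding proposition \emph{uniform} in $\gamma$ on a set of $\mu$-mass at least $1-\e/2$. This is possible only because the tightness estimates of Section \ref{sec:Tightness} are quantitative and simultaneously control every slowly varying and Karamatha ratio used in the McDiarmid argument, so a single set $A$ on which all of them hold with predetermined constants does exist; without the joint quantitative control, one would only recover the weaker pointwise statement of Lemma \ref{Lem:firstconcentration}.
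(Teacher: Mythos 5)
Your proposal is correct and follows essentially the same quenched/annealed strategy as the paper: restrict to the high-probability set of $\lambda$-trajectories furnished by the tightness estimates of Section~\ref{sec:Tightness} (the analogues of \eqref{eq:notgrowingfastthanpoly}, \eqref{eq:unifdsboundintheend}, \eqref{eq:Karamathaintegralcontrol}), observe that on this set the thresholds in the McDiarmid-type proposition can be taken uniformly in $\gamma$, and integrate. Your explicit bookkeeping of the $\e$-budget, the off-by-one/additive-constant correction, and the emphasis on uniformity of $n_0$ as the crux are all consistent with — and somewhat more carefully spelled out than — what the paper does in its (rather compressed) proof.
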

\begin{proof}
    Fix $\beta=2$, $\alpha>1$, and take any $\e>0$. Following the same steps of Proposition \ref{Prop:highprobkaramatharate}, it is possible to conclude that exists $x_0=x_0(\e,\alpha)$ such that:
    \begin{align}
        \P{\frac{1}{2x^{\frac{\beta-1}{2}}} \left(\frac{1}{\lambda(x)}+ \beta\frac{\ln{x}}{\lambda(x)}\right)<1,\,\forall x>x_0}>1-\frac{\e}{3}. \label{eq:Pnotgrowingfastthanpoly}\\
        \P{ x^{-\frac{\alpha-1}{2}}\beta^2\frac{\ln^2{x}}{\lambda^2(x)}<1, \forall x>x_0}>1-\frac{\e}{3}\label{eq:Pnlognotgworingfast} \\
        \P{\frac{\int\limits_{1}^x\left(\frac{\ln{y}}{\lambda(y)}\right)^2 dy}{x\left(\frac{\ln{x}}{\lambda(x)}\right)^2}\leq 2,\,\forall x>x_0}>1-\frac{\e}{3}. \label{eq:PKaramathaintegralcontrol} 
    \end{align}
    So, analogous to equations \eqref{eq:notgrowingfastthanpoly}, \eqref{eq:unifdsboundintheend}, and \eqref{eq:Karamathaintegralcontrol}, every sequence $\gamma$ that satisfies the events in equations \eqref{eq:Pnotgrowingfastthanpoly}, \eqref{eq:Pnlognotgworingfast} and \eqref{eq:PKaramathaintegralcontrol}, has its difference between the expectation of $S_n(\gamma)$ and $\sum_{k=1}^nG_k$ bounded by $C=C(\beta)$. In addition, the probability of the event $\{\sum_{k=1}^n G_k -\Ex{G_k}|>\sqrt{n^\alpha}\}$ is uniformly bounded by $e^{-n^{\frac{\alpha-1}{2}}}$; therefore, it is possible to find $n_1=n_1(\alpha,\e)$ such that: 
    \begin{align*}
        \P{\left|P_n-\sum_{k=1}^n \frac{1}{\lambda_k}\right|<\sqrt{n^{\alpha}},\forall n>n_1}<1-\e.
    \end{align*} Finishing the proof of the proposition.
\end{proof}

\section{First term in the  law of large numbers}\label{sec:law1}\noindent

The random sequences $P_n$ and $\sum_{k=1}^n \lambda_k^{-1}$ are both $\mathcal{F}_n$ measurable, which means that knowing the values of $(P_1,\cdots ,P_n)$ results in knowing the value of $\sum_{k=1}^n \lambda_k^{-1}$. In function of that, we are going to shift our random approach into a deterministic study where it will be exposed the properties of deterministic sequences $(P_n)_n$ that satisfy the event $\left\{\left|P_n-\sum_{k=1}^n \frac{1}{\lambda_k}\right|<\sqrt{n^{\alpha}},\forall n>n_0\right\}$. The easiest example of that affirmation is a direct consequence of the fact that $\lambda_n\in \SV$ with probability one, that is: 
\begin{Prop}\label{Prop:concentrationfirt}
     For every $\e>0$ and $\delta>0$, there exists $n_0=n_0(\e,\delta)$ such that:  
    \begin{align*}
        &\P{P_n\leq n^{1+\delta},\, \forall n>n_0}\geq1-\e, \text{ and}\\
        &\P{(1+\delta)n\lambda_n^{-1}\geq P_n\geq (1-\delta)n\lambda_n^{-1},\,\forall n>n_0}\geq 1-\e.
    \end{align*}
\end{Prop}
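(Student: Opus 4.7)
The plan is to assemble three high-probability facts already proved in the previous sections. First, Proposition \ref{Prop:secondconcentrationgood} lets us replace $P_n$ by the random sum $\sum_{k=1}^n \lambda_k^{-1}$ up to an additive error of order $n^{\alpha/2}$. Second, Proposition \ref{Prop:highprobkaramatharate} (the Karamata-type result) lets us replace that sum by the single term $n\lambda_n^{-1}$ up to a multiplicative factor $(1\pm\delta')$. Third, equation \eqref{eq:highprobdonotgrowfasterthanpoly} controls the sub-polynomial growth of $\lambda_n^{-1}$, which simultaneously forces the additive error of Proposition \ref{Prop:secondconcentrationgood} to be negligible against $n\lambda_n^{-1}$ and delivers the crude upper bound $P_n \leq n^{1+\delta}$.

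Concretely, fix $\e>0$ and $\delta>0$. Pick $\alpha\in(1,2)$ and some auxiliary $\delta'>0$ to be adjusted. By Proposition \ref{Prop:secondconcentrationgood} and Proposition \ref{Prop:highprobkaramatharate}, there is an event $E$ of probability at least $1-2\e/3$ on which, for all $n$ greater than some threshold $n^*$,
\begin{align*}
(1-\delta')\tfrac{n}{\lambda_n} - n^{\alpha/2} \;\leq\; P_n \;\leq\; (1+\delta')\tfrac{n}{\lambda_n} + n^{\alpha/2}.
\end{align*}
For the two-sided estimate (second claim of the proposition), observe that $\lambda_n\leq 1/2$ gives $n\lambda_n^{-1}\geq 2n$, hence $n^{\alpha/2}/(n\lambda_n^{-1}) \leq n^{\alpha/2-1}/2\to 0$ because $\alpha<2$. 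Enlarging $n^*$, the additive error $n^{\alpha/2}$ is at most $\delta'\,n\lambda_n^{-1}$, yielding $|P_n/(n\lambda_n^{-1})-1|\leq 2\delta'$; choosing $\delta'=\delta/2$ from the start gives the bound with probability $\geq 1-2\e/3$.

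For the first claim, add the event $E'$ from equation \eqref{eq:highprobdonotgrowfasterthanpoly}, applied with some $\alpha'>0$ so small that $(1+\delta')(1+\alpha')<1+\delta$ and some constant $\eta>0$, producing $\lambda_n^{-1}\leq \eta\, n^{\alpha'}$ for all $n$ past some threshold, on an event of probability at least $1-\e/3$. On $E\cap E'$,
\begin{align*}
P_n \;\leq\; (1+\delta')\eta\, n^{1+\alpha'} + n^{\alpha/2} \;\leq\; n^{1+\delta}
\end{align*}
for all $n$ sufficiently large, since both $1+\alpha'<1+\delta$ and $\alpha/2<1<1+\delta$. Absorbing all thresholds into a single $n_0$ and noting $\P{E\cap E'}\geq 1-\e$ finishes both claims.

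There is no real obstacle: the proposition is a clean assembly of earlier results. The only care needed is to order the parameter choices correctly — first $\delta$ (given), then $\delta'$ and $\alpha<2$ for the multiplicative estimate, then $\alpha'<\delta$ and $\eta$ for the polynomial bound — and to merge the three high-probability events into one.
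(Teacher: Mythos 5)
Your proof is correct and follows essentially the same route as the paper: the paper's proof is a one-paragraph sketch that invokes Propositions \ref{Prop:secondconcentrationgood} and \ref{Prop:highprobkaramatharate} and the observation that slowly varying functions grow sub-polynomially (equation \eqref{eq:highprobdonotgrowfasterthanpoly}), and you simply make the same assembly explicit — including the key point that $\lambda_n\leq \tfrac12$ forces $n\lambda_n^{-1}\geq 2n$, so the additive $\sqrt{n^\alpha}$ error with $\alpha<2$ is eventually swallowed by $\delta'\,n\lambda_n^{-1}$. The only nit is the throwaway condition ``$(1+\delta')(1+\alpha')<1+\delta$'', which mixes a prefactor with an exponent and is not what you actually use; the condition $\alpha'<\delta$ that you invoke at the end is the correct and sufficient one.
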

\begin{proof}
     This proposition is a direct consequence of the results in Propositions \ref{Prop:secondconcentrationgood} and \ref{Prop:highprobkaramatharate}, where by knowing that $\lambda_n$ is slowly varing and its sum is close to the value of $P_n$, one can give bound to the value of $P_n$. For every $\alpha\in (1,2)$, there exists $n_0$ such that with high probability the event $\{|P_n-\sum_{k=1}^n \lambda_k^{-1}|<\sqrt{n^{\alpha}},\,\forall n>n_0\}$ occurs. Moreover, since $\sum_{k=1}^{n}\lambda_k^{-1}$ is close to $n\lambda_n^{-1}$, and slowly varying functions do not decay faster than the polynomial, the proof is over. 
\end{proof}

Before continuing, notice that the definition of the random parameter $\lambda_n$ in equation \eqref{eq:Definiçãolambdan}, was only used to proof Lemma \ref{Lem:lambdaslowv} which proves that the sequence $(\lambda_n)_n$ is slowly varying.   All the results presented before are still valid assuming that the sequence $(\gamma_n)_n$ satisfy the properties of the Lemma \ref{Lem:lambdaslowv}. Now, to show that the sequence $P_n$ approaches the value of $n\log{n}$, the formula of the expression \eqref{eq:Definiçãolambdan} will be used. 

The choice of the value $n\log{n}$ is not random. It comes within a study of a continuous differential equation derived from Proposition \ref{Prop:concentrationfirt}. To briefly explain this choice as a limit, take $\sum_{k=1}^n \prod_{j=1}^k \left(1-\frac{1}{P_j}\right)^{-1}$ and approximate $1-x$ by $e^{-x}$. Then, exchanging summations with integrals, our concentration is related with solutions $f(x)$ of the following integral inequality:
 \begin{align*}
     \left|f(x)-\int\limits_0^x dy \exp\left\{\int\limits_0^y \frac{dz}{f(z)}\right\}\right|<\sqrt{x^{\alpha}}. \,\forall x>x_0. 
 \end{align*} In particular, analyzing functions  $f(x)$ such that $f(x)=\int\limits_0^x dy \exp\left\{\int\limits_0^y \frac{dz}{f(z)}\right\}$, the differential equation $f'(x)=\ln{f(x)}$ appears; Such differential equation have as solution the inverse function $\lii(x)$, where for every $x\in \R\setminus \{1\}$, define the logarithmic integral function:
 \begin{align*}
     \li(x)=\int\limits_{0}^x \frac{dt}{\ln{t}}.
 \end{align*} Where at $t=1$, for every $x>1$, the integral is interpreted as a  a principal value; See more of the function in \cite{Abramowitz1965-my}. Moreover, analyzing all such solutions for any possible initial value, one gets that all solutions are represented by $\frac{1}{c} \lii\left({cx}\right)$ for some $c>0$, that imply in a leader term of order $n\log{n}+n\log{\log{n}}$. The third term can be different in this set of solutions, and is the reason why we expect Conjecture \ref{Conj:1} to be true.  

With a limit candidate, it will be proved that the random variable $P_n$ fluctuates around $n\log{n}$.

 \begin{Lem}\label{Lem:Pnmaiorquenlnne}
     For every $\e>0$, and integer $N>0$ we have that:
     \begin{align*}
         \P{P_n> (1+\e)n\log{n},\forall n>N}=\P{P_n<(1-\e)n\log{n}, \,\forall n>N}=0. 
     \end{align*} 
 \end{Lem}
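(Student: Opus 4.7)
The plan is a proof by contradiction combining the closed form $\lambda_n=\prod_{k=1}^n(1-P_k^{-1})$, the concentration of $P_n$ around $\sum_{k=1}^n \lambda_k^{-1}$ from Proposition \ref{Prop:secondconcentrationgood}, and the Karamata estimate $\sum_{k=1}^n \lambda_k^{-1}\approx n/\lambda_n$ from Proposition \ref{Prop:highprobkaramatharate}. Fix $\e>0$ and $N>0$. For each $\delta\in(0,1)$ and $\alpha\in(1,2)$, denote by $E_\delta$ the intersection of the high probability events of Propositions \ref{Prop:secondconcentrationgood}, \ref{Prop:highprobkaramatharate} (with parameter $\delta$) together with the event from Lemma \ref{lem:Pnmaiorquelinear} (with $C=1$), so that $\P{E_\delta}>1-\delta$. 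Setting $A_N^+=\{P_n>(1+\e)n\log{n},\,\forall n>N\}$, it is enough to show that $A_N^+\cap E_\delta$ is eventually empty; this will force $\P{A_N^+}\leq\delta$ for every $\delta>0$, hence $\P{A_N^+}=0$.

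On $A_N^+\cap E_\delta$ the hypothesis $P_k^{-1}<((1+\e)k\log{k})^{-1}$ for $k>N$, together with the elementary two sided bound $x\leq -\ln(1-x)\leq x+2x^2$ for $x\in[0,1/2]$ and the convergence of $\sum_k P_k^{-2}$ guaranteed by Lemma \ref{lem:Pnmaiorquelinear}, yields
\begin{align*}
    -\log{\lambda_n}=\sum_{k=1}^n -\ln\left(1-\frac{1}{P_k}\right)\leq \sum_{k=1}^n\frac{1}{P_k}+C_1\leq \frac{1}{1+\e}\log{\log{n}}+C_2,
\end{align*}
where the last step uses the standard integral comparison $\sum_{k\leq n}(k\log{k})^{-1}=\log{\log{n}}+O(1)$. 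Hence $\lambda_n\geq c_0(\log{n})^{-1/(1+\e)}$, equivalently $\lambda_n^{-1}\leq c_0^{-1}(\log{n})^{1/(1+\e)}$, for all large $n$. Feeding this into Karamata gives $\sum_{k=1}^n\lambda_k^{-1}\leq (1+\delta)n\lambda_n^{-1}\leq C_3\, n(\log{n})^{1/(1+\e)}$, and combining with the concentration $|P_n-\sum_{k=1}^n\lambda_k^{-1}|\leq n^{\alpha/2}$ produces
\begin{align*}
    P_n\leq C_3\,n(\log{n})^{1/(1+\e)}+n^{\alpha/2}.
\end{align*}
Since $\alpha/2<1$ and $1/(1+\e)<1$, the right hand side is $o(n\log{n})$, contradicting $P_n>(1+\e)n\log{n}$, so $A_N^+\cap E_\delta$ is empty for $n$ large.

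The event $A_N^-=\{P_n<(1-\e)n\log{n},\,\forall n>N\}$ is handled symmetrically. The reversed bound $P_k^{-1}>((1-\e)k\log{k})^{-1}$ combined with $-\ln(1-x)\geq x$ gives $-\log{\lambda_n}\geq \frac{1}{1-\e}\log{\log{n}}-C$, and hence $\lambda_n^{-1}\geq c_0'(\log{n})^{1/(1-\e)}$; the lower Karamata estimate and the concentration then imply $P_n\geq (1-\delta)c_0'\,n(\log{n})^{1/(1-\e)}-n^{\alpha/2}$, which for large $n$ exceeds $(1-\e)n\log{n}$ because $1/(1-\e)>1$, contradicting $A_N^-$. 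The main obstacle is the bookkeeping of three independent sources of slack, namely the concentration error $n^{\alpha/2}$, the Karamata factor $1\pm\delta$, and the $O(1)$ correction arising from $\sum P_k^{-2}$; this is manageable because the exponent mismatch $1\neq 1/(1\pm\e)$ produces a definite positive power of $\log{n}$ in the final inequality, which ultimately dominates every slack once $n$ is taken large enough.
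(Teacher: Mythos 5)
Your argument is correct, and it follows the same strategy as the paper: argue by contradiction, show that the hypothesis on $P_n$ forces $\sum_{k=1}^n\lambda_k^{-1}$ to grow at the wrong rate ($\asymp n(\log n)^{1/(1+\e)}$ versus $\asymp n(\log n)^{1/(1-\e)}$ respectively), and then invoke the concentration $|P_n-\sum_{k\leq n}\lambda_k^{-1}|\lesssim n^{\alpha/2}$ to contradict the hypothesis. The organization differs slightly: the paper bounds the partial sum $\sum_{k\leq n}\lambda_k^{-1}$ directly, by writing $\lambda_k^{-1}=\lambda_N^{-1}\prod_{j=N+1}^{k}(1-1/P_j)^{-1}$, exponentiating the product, and then applying an integral comparison followed by Karamata for $\ln^{1/(1+\e)}$; you instead first extract the pointwise bound $\lambda_n^{-1}\leq c_0^{-1}(\log n)^{1/(1+\e)}$ from $-\log\lambda_n=\sum_k-\ln(1-1/P_k)$ and then feed it into the tightness estimate $\sum_{k\leq n}\lambda_k^{-1}\leq(1+\delta)n\lambda_n^{-1}$ of Proposition~\ref{Prop:highprobkaramatharate}. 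Both routes land on the same intermediate bound.

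One point in favour of your write-up: the paper's inequality in~\eqref{eq:importanteboudnup}, namely $\prod_{j}(1-1/P_j)^{-1}\leq\exp\{\sum_j 1/P_j\}$, actually has the reverse sign (since $1-x\leq e^{-x}$ gives $(1-x)^{-1}\geq e^{x}$); it needs a multiplicative $O(1)$ correction coming from $\sum_j P_j^{-2}<\infty$ to be an upper bound. You handle this explicitly via the two-sided estimate $x\leq-\ln(1-x)\leq x+2x^2$ and the convergence of $\sum_k P_k^{-2}$, which is the cleaner way to present it. (A small remark: $P_k\geq k+1$ holds deterministically, so $\sum_kP_k^{-2}<\infty$ does not actually need Lemma~\ref{lem:Pnmaiorquelinear}, but using it does no harm.)
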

 \begin{proof} 
 This proof consists in showing that bounding the value of $P_n$ creates a bound for the summation $\sum_{k=1}^n \lambda_n^{-1}$ that do not grows in the right scale. Without loss of generality, fix $\delta>0$ and  assume $N$ large enough such that for every $n>N$, one get:
 \begin{align*}
     1+\delta \geq \frac{\int_1^n \ln^{\frac{1}{1+\e}}(t)dt}{n\ln^{\frac{1}{1+\e}}(n)}\geq 1-\delta\\
     1+\delta \geq \frac{\int_1^n \ln^{\frac{2}{2-\e}}(t)dt}{n\ln^{\frac{2}{2-\e}}(n)}\geq 1-\delta.
 \end{align*} Where it is used Karamatha's results, Proposition \ref{Prop:Karamathalimit1}, since $\ln^{\beta}(x)$ is slowly varying for every $\beta\geq 0$.
 
    The argument explored here, start by observing that $P_n$ and the summation $\sum_{k=1}^n \lambda_k^{-1}$ are close together with high probability. The next step is to find a special region, where if the generator $P_n$ belongs to it, the distance of $P_n$ to $\sum_{k=1}^n \lambda_k^{-1}$ increases faster than $\sqrt{n^{\alpha}}$. Implying that the generator with high probability does not belong to such region. 
    
     Our region is the set of sequences where $\{P_n>(1+\e)n\log{n},\forall n>N\}$ occurs. Start by dividing the summation in the time $N$, so one get:
     \begin{align}\label{eq:sumlambdameantilln}
         \sum_{k=1}^{n} \lambda_{k}^{-1}&= \sum_{k=1}^{N} \lambda_{k}^{-1} + \lambda_N^{-1} \sum_{k=N}^n \prod_{j=N+1}^n \left(1-\frac{1}{P_{j}}\right)^{-1}.
     \end{align} Now,  assuming that $\{P_n>(1+\e)n\log{n},\,\forall n>N\}$ happens, it is possible to bound:  
     \begin{align}\label{eq:importanteboudnup}
         \sum_{k=N}^n \prod_{j=N+1}^k \left(1-\frac{1}{P_{j}}\right)^{-1}&\leq \sum_{k=N}^n \exp\left\{\sum_{j=N+1}^k\frac{1}{P_{j}}\right\}\nonumber\\
         &\leq \int\limits_N^{n} \exp\left\{\int\limits_N^t \frac{dx}{\left(1+\e\right)x\ln{x}}\right\} dt=\int\limits_{N}^n \frac{\ln^{\frac{1}{1+\e}}(t)}{\ln^{\frac{1}{1+\e}}(N)} dt\nonumber\\
         &\leq \frac{(1+\delta)n\ln^{1/(1+\e)}(n) - (1-\delta) N\ln^{1/(1+\e)}(N)}{\ln^{\frac{1}{1+\e}}(N)}.
     \end{align} Since $N$ is a fixed value, taking $n$ to infinity in equation \eqref{eq:sumlambdameantilln}, the growth of $\sum_{k=1}^{n} \lambda_{k}^{-1}$ is bounded above by some constant (depending in $N$) times  $n\ln^{\frac{1}{1+\e}}(n)$. So, eventually, this is smaller than $(1+\e)n\log{n}$. Since with high probability the values of $(P_n)_n$ follow up the summation $\sum_{k=1}^{n} \lambda_{k}^{-1}$, the generators cannot stay forever above $(1+\e)n\log{n}$. 

     In the other case, when $\{P_n<(1-\e)n\log{n},\forall n>N\}$ occurs, it is also possible to assume, without loss of generality, that $N$ is large enough so that $x< \frac{1}{(1-\e)N\log{n}}$,  in particular  $1-(1-\e)x\geq e^{\left(1-\frac{\e}{2}\right)x}$, and:
     \begin{align}\label{eq:(1-e)bound}
          \sum_{k=1}^{n} \lambda_{k}^{-1}&\geq \sum_{k=N}^n \prod_{j=N+1}^k \left(1-\frac{1}{P_{j}}\right)^{-1}\\
          &\geq \frac{(1+\delta)n\ln^{1/\left(1-\frac{\e}{2}\right)}(n) - (1-\delta) N\ln^{1/\left(1-\frac{\e}{2}\right)}(N)}{\ln^{\frac{1}{1-\frac{\e}{2}}}(N)}.\nonumber
     \end{align} Again, notice that $n\ln^{1/(1-\frac{\e}{2})}(n)$ grows faster than $n\log{n}$, and the proof is over. 
 \end{proof}

\begin{Rmk}
    Given $c_1>c_2>0$ and some $a>0$. Consider the functions $c_1n\ln^{\frac{1}{1+a}}{n}$ and $c_2n\ln{(n)}$. Both at $n=1$ are zero,  but since $c_1>c_2$, initially $c_1n\ln^{\frac{1}{1+a}}{n}\geq c_2n\log{n}$. Then, when $n>\exp\left\{\left(\frac{c_1}{c_2}\right)^{\frac{1+a}{a}}\right\}$, these functions invert and one gets $c_1n\ln^{\frac{1}{1+a}}{n}\leq c_2n\log{n}$. In the renewal covering of the natural numbers, $c_1$ grows when $n$ is large and the constant $a$ is usually small; thus, all results treated in this paper take an extremely long time to occur. 
\end{Rmk}

The above Lemma with some careful treatment can give the law of large numbers of the random variables $P_n$, by providing that these different orders of growth blocks the process to recover.

Now, it is time to proof Theorem \ref{thm:1}. 

\begin{proof}[Proof of Theorem \ref{thm:1}]
The proof is divided into two analogous parts: The first part shows that $\limsup_n \frac{P_n}{n\log{n}}=1$ almost surely, and the second part shows that $\liminf_n \frac{P_n}{n\log{n}}=1$ with probability one. Since the demonstrations are analogous, it will be proven just that $\limsup_n \frac{P_n}{n\log{n}}=1$ almost surely. The idea of the proof consists of dividing the space into layers, like an onion, where the layers are of the form $(1+ka)n\log{n}$, for any fixed $a>0$ and $k\in \N $. When $n$ is large, if the process tries to escape the value $n\log{n}$ and enters one of these onion layers, one can use the bound founded in Lemma \ref{Lem:Pnmaiorquenlnne} to show that the summation $\sum_{k=1}^n \lambda_k^{-1}$ never reach the next layer. Therefore, the process that oscillates around $n\log{n}$, and is close to the sum $\sum_{k=1}^n \lambda_k^{-1}$, will not be able to grow larger than $(1+a)n\ln{n}$, so it concentrates bellow $n\ln{n}$. Moreover, the same can be done in the layers $(1-ka)n\log{n}$ to prove that $\liminf_{n}\frac{P_n}{n\log{n}}=1$ almost surely. Finishing the proof of Theorem \ref{thm:1}.

Let any $\e>0$, $a>0$ and $\alpha\in(1,2)$, choose $\delta>0$ small enough so that $(1+a)\frac{1+\delta}{1-\delta}<1+2a$. Then, find $n_0=n_0(\e,\delta)$ such that with probability $1-\e$, for every $n>n_0$, the sequences $(P_n)_n$, $(\lambda_n)_n$  and the function $\ln^{\frac{1}{1+a}}(n)$ satisfies: 
\begin{align}\label{eq:conditions1boa}
    \left|P_n-\sum_{k=1}^n \lambda_k^{-1}\right|<\sqrt{n^{\alpha}}\\
    1+\delta\geq \frac{\sum_{k=1}^n \lambda_k^{-1}}{n\lambda_n^{-1}}\geq 1-\delta, \label{eq:conditions2boa}\\
    1+\delta\geq  \frac{\int\limits_1^n \ln^{\frac{1}{1+a}}(x) dx}{n\ln^{\frac{1}{1+a}}(n)}\geq 1-\delta. \label{eq:conditions3boa}
\end{align}

When the process is around $n\log{n}$ and tries to reach the values of order $(1+2a)n\log{n}$, it has to reach the value $(1+a)n\log{n}$. Moreover, there exists an up-crossing event: a moment where the process is smaller or equal to $(1+a)n\log{n}$, and eventually reaches $(1+2a)n\log{n}$ without being smaller then $(1+a)n\log{n}$. The bounds proposed works in the upcrossing event, and shows that the process cannot reach the next layer.  See in Figure \ref{fig:1} a representation of what is going to happen in the proof. 

\begin{figure}[h!]
    \centering
    \begin{tikzpicture}
        \draw (-2,0)--(7,0) node[anchor=west] {$x\ln{x}$};
        \draw (-2,1)--(7,1) node[anchor=west] {$(1+a)x\ln{x}$};
        \draw (-2,2)--(7,2) node[anchor=west] {$(1+2a)x\ln{x}$};
        \draw[red] (-2,0.1)--(-1.8,0.3)--(-1.7,0.4)--(-1.6,0.3)--(-1.5,0.6)--(-1.4,0.7)--(-1.3,0.5)--(-1.2,0.6)--(-1.1,0.9)--(-1,1.1)--(-0.9,0.9)--(-0.8,1.3)--(-0.7,1.6)--(-0.6,1.7)--(-0.5,1.2)--(-0.4,1.3)--(-0.3,1.5)--(-0.2,1.2)--(-0.1,0.9)--(0,1);
        \filldraw[red] (0,1) circle (2pt);
        \draw[thick] (0,1) ..controls (0.1, 2.5) and (3,1.8).. (5,1);
        \draw[red] (0,1)--(0.2,1.3)--(0.3,1.3)--(0.4,1.5)--(0.5,1.3)--(0.6,1.6)--(0.8,1.8)--(1,1.3)--(1.1,1.5)--(1.3,1.6)--(1.6,1.1)--(1.7,1.3)--(2,1.7);
        \filldraw[red] (2,1.7) circle (1pt);
    \end{tikzpicture}
    \caption{With the space divided in layers of the form $(1+ka)x\ln{x}$, the process in red is trying to cross the layer $(1+a)n\log{n}$, but it is being blocked by a black bound}
    \label{fig:1}
\end{figure}
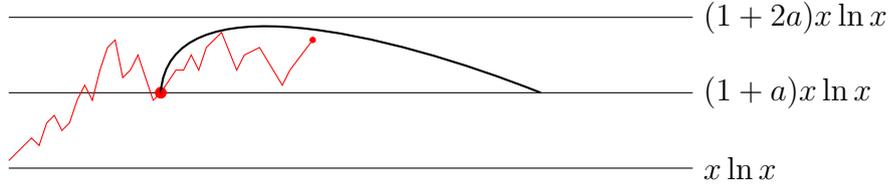

With $a>0$ fixed and $n>n_0$, to simplify some calculations, assume the following. First, suppose that $\sum_{k=1}^n \lambda_k^{-1}=(1+a)n\log{n}$. The second assumption is to fix $(\lambda_n)_n$ and $(P_n)_n$ that satisfies conditions \eqref{eq:conditions1boa} and \eqref{eq:conditions2boa}, that is always possible with high probability. Then, the proof will reach an absurdity when assuming the following condition that imitates the upcrossing event, that is: for every $m>n$, until $P_m>(1+2a)m\ln{m}$, the process satisfies $P_m>(1+a)m\ln{m}$.

Consider $\beta>1$, computing the summation $\sum_{k=1}^{n^{\beta}} \lambda_{k}^{-1}$ in the interval $[1,n^{\beta}]$, where in the value $n$, the process satisfied that $\sum_{k=1}^n \lambda_k^{-1}=(1+a)n\log{n}$, and by hypotheses \eqref{eq:conditions2boa} it is true that $\lambda_n^{-1}\leq \frac{(1+a)\ln{n}}{(1-\delta)}$. Then, it is possible to get using inequality \eqref{eq:importanteboudnup} the following:
     \begin{align}
         \sum_{k=1}^{n^{\beta}} \lambda_{k}^{-1}&= \sum_{k=1}^{n} \lambda_{k}^{-1} + \lambda_n^{-1} \sum_{k=n+1}^{n^{\beta}} \prod_{j=n+1}^k \left(1-\frac{1}{P_{j}}\right)\nonumber\\
         &\leq (1+a)n\log{n}+ (1+a)\frac{\log{n}}{1-\delta}\frac{\left((1+\delta)n^{\beta}\ln^{\frac{1}{1+a}}(n^{\beta}) - (1-\delta) n\ln^{\frac{1}{1+a}}(n)\right)}{\ln^{\frac{1}{1+a}}(n)}\nonumber \\
         &= \frac{(1+a)(1+\delta)}{1-\delta}\frac{\beta^{1/(1+a)}}{\beta} n^{\beta}\ln{n^{\beta}}\leq \frac{(1+a)(1+\delta)}{1-\delta} n^{\beta}\ln{n^{\beta}}.\label{eq:boundintheonion}
     \end{align} By the choice of $\delta>0$, for any $\beta>1$ the sum value never reaches the curve $(1+2a)x\ln{x}$, as desired. 

   To finish the proof, let us generalize our initial assumptions. The first assumption is that the function starts at the value $(1+a)n\log{n}$, to generalize this affirmation, consider the process between $(1+a)n\log{n}$ and $\left(1+\frac{3a}{2}\right)n\log{n}$.Therefore, analyzing the process starting from the first point where an upcrossing $(1+a)n\log{n}$ to $\left(1+\frac{5}{2}a\right)n\log{n}$ occurs, the error in the initial position of the crossing will not interfere with the bound. 
   
    Moreover, to finish the proof, since for every $a>0$, by Lemma \ref{Lem:Pnmaiorquenlnne}, one sees that $P_n$ cannot stay greater than $(1+a)n\log{n}$ for ever, so it must be close to $n\log{n}$. Thus, eventually, for every $a>0$, when $n$ is large, it will return and will never do an upcrossing to $(1+2a)n\log{n}$ again. Since $a>0$ is arbitrary, this get that: 
     \begin{align*}
        \P{\limsup_{n\to \infty} \frac{P_n}{n\log{n}}=1}=1. 
     \end{align*}

     The proof of $\liminf_{n\to \infty}\frac{P_n}{n\log{n}}=1$ is analogous, just taking the $n$ large enough to get the inequality \eqref{eq:(1-e)bound}.  
\end{proof}

\bibliographystyle{plain}
\bibliography{ref}

\begin{thebibliography}{10}

\bibitem{Riemann2008}
{\em The Riemann Hypothesis: A Resource for the Afficionado and Virtuoso
  Alike}.
\newblock Springer New York, 2008.

\bibitem{Abramowitz1965-my}
Milton Abramowitz and I~A Stegun, editors.
\newblock {\em Handbook of mathematical functions}.
\newblock Dover Books on Mathematics. Dover Publications, Mineola, NY, June
  1965.

\bibitem{Beurling1937}
Arne Beurling.
\newblock Analyse de la loi asymptotique de la distribution des nombres
  premiers généralisés. i: Mémoire dédié à m. holmgren.
\newblock {\em Acta Mathematica}, 68(0):255–291, 1937.

\bibitem{Cramr1936}
Harald Cramér.
\newblock On the order of magnitude of the difference between consecutive prime
  numbers.
\newblock {\em Acta Arithmetica}, 2(1):23–46, 1936.

\bibitem{Diamond2005}
Harold~G. Diamond, Hugh~L. Montgomery, and Ulrike~M.A. Vorhauer.
\newblock Beurling primes with large oscillation.
\newblock {\em Mathematische Annalen}, 334(1):1–36, November 2005.

\bibitem{Durrett2019}
Rick Durrett.
\newblock {\em Probability: Theory and Examples}.
\newblock Cambridge University Press, April 2019.

\bibitem{Dusart1999}
Pierre Dusart.
\newblock The $k^{th}$ prime is greater than $k(lnk+lnlnk-1)$ for $k\geq 2$.
\newblock {\em Mathematics of Computation}, 68(225):411–415, 1999.

\bibitem{Landau1903}
Edmund Landau.
\newblock Neuer beweis des primzahlsatzes und beweis des primidealsatzes.
\newblock {\em Mathematische Annalen}, 56(4):645–670, December 1903.

\bibitem{McDiarmid1989}
Colin McDiarmid.
\newblock {\em On the method of bounded differences}, page 148–188.
\newblock Cambridge University Press, August 1989.

\bibitem{Olofsson2011}
Rikard Olofsson.
\newblock Properties of the beurling generalized primes.
\newblock {\em Journal of Number Theory}, 131(1):45–58, January 2011.

\bibitem{Resnick1987}
Sidney~I. Resnick.
\newblock {\em Preliminaries}, page 1–37.
\newblock Springer New York, 1987.

\bibitem{Rosser1941}
Barkley Rosser.
\newblock Explicit bounds for some functions of prime numbers.
\newblock {\em American Journal of Mathematics}, 63(1):211, January 1941.

\end{thebibliography}

\end{document}